\numberwithin{equation}{section}
\newcommand{\na}{\nabla}
\newcommand{\RR}{\mathbb{R}}
\newtheorem{theorem}{Theorem}[section]
\newtheorem{theorem/definition}{Theorem/Definition}[section]
\newtheorem{proposition}{Proposition}[section]
\newtheorem{lemma}{Lemma}[section]
\theoremstyle{remark}
\newtheorem{remark}{Remark}[section]
\theoremstyle{definition}
\begin{document}
\title[Curvature estimates for 4D gradient expanding Ricci solitons]
{Curvature Estimates for four-dimensional complete gradient expanding Ricci solitons}
\author{HUAI-DONG CAO AND Tianbo Liu}
\address{Department of Mathematics,  Lehigh University,
Bethlehem, PA 18015, USA}
\email{huc2@lehigh.edu; til615@lehigh.edu}

\thanks{Research of Huai-Dong Cao was partially supported by a Simons Foundation Collaboration Grant (\#586694 HC)}

\begin{abstract}
In this paper, we derive curvature estimates for $4$-dimensional complete gradient expanding Ricci solitons with nonnegative Ricci curvature (outside a compact set $K$). More precisely, we prove that the norm of the curvature tensor $Rm$ and its covariant derivative $\na Rm$ can be bounded by the scalar curvature $R$ by $|Rm|\le C_aR^{a}$ and  $|\na Rm|\le C_aR^{a}$ (on $M\setminus K$), for any $0\leq a<1$ and some constant $C_a>0$.  Moreover, if the scalar curvature has at most polynomial decay at infinity, then $|Rm|\le CR$  (on $M\setminus K$). As an application, it follows that if a $4$-dimensional complete gradient expanding Ricci soliton $(M^4,g,f)$ has nonnegative Ricci curvature and finite asymptotic scalar curvature ratio then it has finite asymptotic curvature ratio, hence admits $C^{1, \alpha}$ asymptotic cones at infinity $(0<\alpha <1)$ according to Chen-Deruelle \cite{ChenDer}.

\end{abstract}

\maketitle
\date{}

\section{Introduction}

A complete Riemannian manifold $(M^n, g)$ is called a {\it gradient Ricci soliton} if there exists
a smooth function $f$ on $M^n$ such that the Ricci tensor $Rc$
of the metric $g$ satisfies the equation
$$ Rc + \nabla^2 f=\lambda g$$
for some $\lambda \in{\mathbb R}$, where $\na^2 f$ denotes the Hessian of $f$. The Ricci soliton is said to be expanding, or steady, or shrinking if $\lambda<0$, or  $\lambda=0$, or $\lambda>0$. 
The function $f$ is called a {\it potential function} of the gradient Ricci soliton.
Clearly, when $f$ is a constant the gradient Ricci soliton $(M^n, g, f)$ is simply an Einstein manifold. 

Gradient Ricci solitons are a natural extension of Einstein metrics. They are also self-similar solutions to Hamilton's  Ricci flow and play an important role in the study of formation of singularities \cite{Ha95F}.  In particular, gradient expanding Ricci solitons may arise as Type III singularity models in the Ricci flow (under suitable positive curvature assumptions)  \cite{Cao97, ChenZhu}, and over which the matrix Li-Yau-Hamilton inequality (also known as the matrix  differential Harnack inequality) becomes equality \cite{Ha93, Cao92}.  Indeed, Schulze-Simon \cite {SS13}  obtained gradient expanding solitons out of the asymptotic cones at infinity of solutions to the Ricci flow on complete noncompact Riemannian manifolds with bounded and non-negative curvature operator and positive asymptotic volume ratio. See also more recent work \cite{Der16, CD20}.

There have been a lot of advances in investigating the geometry of gradient shrinking and steady Ricci solitons in the past two decades, while the research activities  on expanding solitons have also picked up in recent years.  In this paper, we are mainly concerned with $4$-dimensional complete noncompact gradient expanding solitons with nonnegative Ricci curvature.  
By scaling, we shall assume throughout the paper that $\lambda=-\frac1 2$, so the gradient expanding Ricci soliton equation is of the form
\begin{equation}
Rc+\nabla^2 f= -\frac 1 2 g
\end{equation}

Like steady solitons,  compact expanding solitons are necessarily Einstein.  Hence the study of expanding solitons has been focused on complete noncompact ones.  
The simplest example of a complete (noncompact) gradient expanding  soliton is the {\it Gaussian expander }on $\RR^n$, with the standard flat metric and potential function $f(x)=-|x|^2/4$. In addition, Bryant \cite{Bryant} and the first author \cite{Cao97} have constructed non-flat rotationally symmetric expanding gradient Ricci and K\"ahler-Ricci solitons on $\RR^n$ and $\mathbb {C}^n$, respectively. In particular, they constructed one-parameter families of expanding solitons with positive sectional curvature that are asymptotic to a cone at infinity. Moreover,  the constructions in \cite{Cao94, Cao97} have been extended by Feldman-Ilmanen-Knopf \cite{FIK} to a construction of gradient  expanding K\"ahler-Ricci solitons on the complex line bundles $O(-k)$ ($k>n$) over the complex projective space ${\mathbb C}P^n$ ($n\geq 1$), and further generalized by Dancer-Wang  \cite{DW11}. 
For additional examples and other constructions, see, e.g., \cite{PTV00, Lauret01, GK04, BL07, DW09, FW11, BDGW15, AK19, Win19b} and the references therein. Note especially that the examples constructed in \cite{Lauret01} and  \cite{BL07} are non-gradient Sol and Nil expanding solitons, respectively.

In recent years, researchers in the field have obtained several rigidity or uniqueness results concerning gradient expanding solitons. For example, Pigola-Rimoldi-Setti \cite{PRS} proved that under certain integrability assumptions of the soliton vector field complete gradient expanding solitons must be trivial.  Catino-Mastrolia-Monticelli \cite{CMM} showed that any complete noncompact gradient expanding solitons with nonnegative sectional curvature and $L^1$-integrable scalar curvature is necessarily isometric to a quotient of the Gaussian expanding soliton. 
In addition, it was shown in \cite{Cao et al}  that a Bach-flat gradient expanding  soliton with positive Ricci curvature must be rotationally symmetric; see also related recent results \cite{Kim, CaoYu,  FLi} for expanding solitons with harmonic Weyl curvature or vanishing $D$-tensor. 
In another direction, Chodosh \cite{Chod} showed that any gradient expanding Ricci soliton with positive sectional curvature which is asymptotically conical  must be rotationally symmetric, hence one of the Bryant expanding solitons on $\RR^n$.  A similar result for gradient expanding K\"ahler-Ricci solitons was proved by Chodosh-Fong \cite{CF}, namely expanding K\"ahler-Ricci solitons which have positive holomorphic bisectional curvature and are $C^2$-asymptotic to a conical K\"ahler manifold at infinity must be the $U(n)$-invariant expanding solitons constructed in \cite{Cao97}.
We also mention that Chen-Deruelle \cite{ChenDer} 
have studied the asymptotic geometry of expanding solitons with finite asymptotic curvature ratio. 
However,  for more general gradient expanding solitons we still know very little about their geometry, except in dimension $n=2$ for which there is a complete classification \cite{BM, Ramos}. 

It is well-known that, in studying complete noncompact  Riemannian manifolds, it is very crucial to have information on volume growth rate and asymptotic curvature behavior at infinity. 
Concerning the volume growth rate, Hamilton \cite {Ha05} (see also Proposition 9.46 in \cite{CLN}) showed that any $n$-dimensional complete noncompact gradient expanding soliton $(M^n, g, f)$ with nonnegative Ricci curvature $Rc\ge 0$ must have positive {\it asymptotic volume ratio} (equivalently, the maximal volume growth in view of Bishop-Gromov). Namely, for any base point $x_0\in M^n$, 
\begin{equation}
\lim_{r\to \infty} \frac {V (x_0, r)}{r^n} >0 ,
\end{equation}
where $V(x_0, r)$ denotes the volume of the geodesic ball $B(x_0, r)$ of radius $r>0$ centered at $x_0$.  Subsequently, Carrillo-Ni \cite{CarNi} extended the above result of Hamilton

\noindent to complete noncompact gradient expanding solitons with nonnegative scalar curvature $R\ge 0$ (see Proposition 5.1 in \cite{CarNi}).

As for curvature estimates,  very recently P.-Y. Chan \cite{Chan2} proved that if $(M^4, g, f)$ is a complete noncompact gradient expanding Ricci soliton with bounded scalar curvature $|R|\le R_0$ and proper potential function $f$ (i.e., $\lim_{x\to \infty} f(x) =-\infty$), then the curvature tensor $Rm$ is bounded. Note that $Rc\ge 0$ implies $0\le R\le R_0$, for some $R_0>0$, and $f$ proper (see Lemma 2.2 and Lemma 2.3).  Hence, it follows that $4$-dimensional complete gradient expanding solitons with nonnegative Ricci curvature $Rc\geq 0$  must have bounded Riemann curvature tensor $|Rm|\le C$. 

We point out  that recent progress on curvature estimates for 4-dimensional gradient Ricci solitons has been led by the important work of Munteanu-Wang \cite{MW},  in which they proved that any complete gradient shrinking soliton with bounded scalar curvature  must have bounded Riemann curvature tensor. More significantly, they showed that the Riemann curvature tensor is controlled by the scalar curvature by $|Rm|\le C R$ so that if the scalar  curvature $R$ decays at infinity so does the curvature tensor $Rm$; see also \cite {Cao et al2} for an extension, and \cite{CaoCui, Chan1} for similar estimates in the steady soliton case. 
Their curvature estimate, together with the uniqueness result of Kotschwar-Wang \cite{KW15}, has played a crucial role in the recent advance of classifying 4-dimensional complete gradient Ricci solitons, as well as in the classification of complex 2-dimensional complete gradient K\"ahler-Ricci solitons with scalar curvature going to zero at infinity by Conlon-Deruelle-Sun \cite{CDS19}.

We also remark that one of the important facts used in the work of Munteanu and Wang \cite{MW} is a very useful result of Chow-Lu-Yang \cite{CLY} on the scalar curvature lower bound $R\ge C/ f$, for some constant $C>0$, for any $n$-dimensional non-flat complete noncompact gradient shrinking soliton. Note that by the optimal asymptotic growth estimate of Cao-Zhou \cite{CaoZhou} on the potential function $f$, this is equivalent to the scalar curvature $R$ has at most quadratic decay at infinity.

Motivated by the work of Munteanu-Wang \cite{MW}, as well as Cao-Cui \cite{CaoCui} and Chan 
\cite{Chan1, Chan2}, it is natural to ask if one could also control the Riemann curvature tensor $Rm$ of a $4$-dimensional complete gradient expanding soliton with nonnegative Ricci curvature by its scalar curvature $R$. Despite some key differences with the shrinking case, notably the lack of scalar curvature quadratic (or polynomial) decay lower bound for expanding Ricci solitons, this turns out to be possible.

\begin{theorem} Let $(M^4, g, f)$ be a $4$-dimensional complete noncompact 
gradient expanding Ricci soliton with nonnegative Ricci curvature $Rc\ge 0$. Then, there exists a constant $C>0$  such that, for any $0\leq a<1$, the following estimates hold: 
\begin{equation}
 |Rm|  \le \frac {C} {1-a} R^a \quad {\mbox{and}} \quad |\nabla Rm|\le \frac {C} {(1-a)^2} R^a \quad on \ M^4 .  
\end{equation}
Moreover,  if in addition the scalar curvature $R$ has at most polynomial decay then 
\begin{equation}
 {|Rm|} \le C R  \quad on \ M^4 . 
\end{equation}
\end{theorem}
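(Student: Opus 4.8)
The plan is to reduce the entire estimate to a bound on the Weyl tensor and then to control the Weyl tensor through weighted integral estimates and a Moser iteration, rather than through a naive pointwise maximum principle. I would begin by recording the soliton identities for $\lambda=-\tfrac12$: the contracted relations $\nabla R = 2Rc(\nabla f,\cdot)$ and $R+|\nabla f|^2+f=\mathrm{const}$, the trace identity $R+\Delta f=-2$, and the weighted elliptic equation $\Delta_f R = -R-2|Rc|^2$, where $\Delta_f=\Delta-\langle\nabla f,\nabla\cdot\rangle$. Since $Rc\ge 0$ forces $0\le R\le R_0$ and $|Rc|\le R$, and since Lemmas 2.2--2.3 together with Chan's theorem give $|Rm|\le C$ with $f$ proper, the exponent $a=0$ is already the base case. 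Using the four-dimensional decomposition $|Rm|^2=|W|^2+2|E|^2+\tfrac16 R^2$, with $E:=Rc-\tfrac R4 g$ the traceless Ricci tensor, the inequalities $|E|\le|Rc|\le R\le R_0^{1-a}R^{a}$ and $R^2\le R_0^{2-2a}R^{2a}$ show that the Ricci and scalar parts are automatically dominated by $CR^{a}$; hence everything reduces to the single estimate $|W|\le C_aR^{a}$.

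The core is therefore an estimate for the Weyl tensor, and this is where the \emph{main obstacle} lies. In dimension four the self-dual and anti-self-dual parts $W^\pm$ satisfy weighted elliptic systems of the schematic form $\Delta_f W^\pm = -W^\pm + W^\pm * W^\pm + W^\pm * E$, while the contracted second Bianchi identity expresses $\mathrm{div}\,W$ through $\nabla E$ (the Cotton tensor), so every coupling term is controlled by $R$ and $\nabla Rc$. The essential difficulty is that the expander sign $2\lambda=-1$ produces a linear term $-W$ of the wrong sign: forming the ratio $|W|R^{-a}$ and applying the maximum principle fails, because the gain $+a\,|W|R^{-a}$ coming from $\Delta_f R$ can never beat the loss $-|W|R^{-a}$ while $a<1$. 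This is precisely the manifestation of the missing scalar-curvature lower bound, the expander analogue of Chow--Lu--Yang. To circumvent it I would work at the integral level: multiplying the Bochner inequality for $|W|^2$ by $R^{-2a}\zeta^2$ with a cutoff $\zeta$ and integrating against $e^{-f}dV$ (for which $\int_M(\Delta_f u)\,e^{-f}dV=0$ on compactly supported $u$), integration by parts converts the bad linear term into a finite zeroth-order contribution and yields a Caccioppoli-type inequality bounding $\int|\nabla W|^2R^{-2a}e^{-f}$ by $\int|W|^2R^{-2a}e^{-f}$ plus cutoff errors. The weight $R^{-2a}$ is admissible only for $a<1$, and the constants blow up like $1/(1-a)$ as $a\uparrow 1$ because the controlling integral diverges at the endpoint.

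To pass from these weighted bounds to the pointwise estimate $|W|\le C_aR^{a}$ I would run a De Giorgi--Nash--Moser iteration. Here the hypothesis $Rc\ge 0$ is decisive: by Hamilton and Carrillo--Ni it yields maximal volume growth, hence a uniform Sobolev inequality, which is exactly what makes the iteration converge globally on $M^4$ and produces the supremum bound from the weighted integrals estimated above. Carrying out the same scheme one derivative higher, via the equation satisfied by $\nabla Rm$ in which $Rm$ now appears as a coefficient already controlled by the first estimate, reproduces the argument with one additional application of the iteration; this is the reason the constant for $|\nabla Rm|$ degrades to $1/(1-a)^2$.

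Finally, for the sharp bound $|Rm|\le CR$ under the extra assumption that $R$ has at most polynomial decay, I would use that hypothesis as a genuine substitute for the absent scalar-curvature lower bound. Since $R+|\nabla f|^2+f=\mathrm{const}$ and $Rc\ge 0$ force $-f$ to be comparable to the square of the distance, at most polynomial decay of $R$ translates into a lower bound $R\ge c(-f)^{-m}$, i.e. $\log(1/R)\le C\log(-f)$. Feeding this into the family $|W|\le\tfrac{C}{1-a}R^{a}$ and optimizing the exponent pointwise by choosing $1-a$ comparable to $1/\log(1/R)$ gives $|W|\le CR$ up to a logarithmic factor; removing the logarithm amounts to repeating the weighted estimate directly at the endpoint $a=1$, which is now admissible because the lower bound on $R$ renders the borderline weight $R^{-2}$ usable. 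I expect this endpoint integrability, equivalently the control of the region where $R$ is small without a scalar lower bound, to be the crux, and the polynomial-decay hypothesis is precisely what resolves it.
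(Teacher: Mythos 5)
Your diagnosis of the obstacle is right --- for an expander the linear term $-|Rm|$ has the wrong sign and the zeroth-order gain $+a|Rm|R^{-a}$ from $\Delta_f(R^{-a})$ cannot cancel it when $a<1$ --- but the remedy you propose does not actually overcome it, and it misses the one ingredient the argument really turns on. Your weighted integral scheme is circular: multiplying the Bochner inequality for $|W|^2$ by $R^{-2a}\zeta^2$ and integrating against $e^{-f}dV$ only yields a Caccioppoli inequality, i.e. $\int|\nabla W|^2R^{-2a}\zeta^2e^{-f}\le C\int|W|^2R^{-2a}e^{-f}+\dots$, which provides no smallness of the right-hand side; to start a De Giorgi--Nash--Moser iteration you would first need a uniform $L^2$ (or $L^p$) bound on $|W|^2R^{-2a}$, which is essentially the statement being proved. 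Worse, on an expander $f\to-\infty$ with $-f\sim r^2/4$, so the weight $e^{-f}\sim e^{r^2/4}$ \emph{blows up} and the weighted integrals over annuli $D(3\rho)\setminus D(\rho)$ grow exponentially in $\rho$; the cutoff errors cannot be discarded as $\rho\to\infty$. (This is exactly the opposite of the shrinking case, where $e^{-f}$ is a Gaussian weight and such integral arguments are available.) Finally, your route to the sharp bound $|Rm|\le CR$ --- optimizing $a$ pointwise with $1-a\sim 1/\log(1/R)$ --- leaves a factor $\log(1/R)$, and the claim that the endpoint $a=1$ "becomes admissible" is asserted rather than proved.

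The missing idea is the four-dimensional pointwise inequality of Munteanu--Wang, $|Rm|\le A_0\bigl(|Rc|+|\nabla Rc|/|\nabla f|\bigr)$, combined with the fact that $|\nabla f|^2=-f-R\to\infty$. This gives $|\nabla Rc|^2\ge \tfrac{\Lambda}{2A_0^2}|Rm|^2-\Lambda|Rc|^2$ outside a compact set for any prescribed $\Lambda$, which is the engine that defeats the bad linear term: one works with $v=|Rm|R^{-a}+|Rc|^2R^{-2a}$ and the drifted operator $\Delta_{f-2a\ln R}$, and the good gradient term $2(1-a)|\nabla Rc|^2R^{-2a}$ produced by the second summand converts into a good \emph{quadratic} term $\ge 2(1-a)v^2-C_1v-C_2$. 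A cutoff in $F=-f+n/2$ (using $|\nabla\ln R|\le 2|\nabla F|$, valid since $Rc\ge0$) and the ordinary maximum principle then give $v\le C/(1-a)$; no Sobolev inequality or iteration is needed. The same device one derivative higher, with $w=|\nabla Rm|R^{-a}+|Rm|^2R^{-2a}$, yields $|\nabla Rm|\le C(1-a)^{-2}R^a$. For the last assertion, polynomial decay $R\ge C r^{-d}$ gives $|\nabla f|^{-1}\le CR^{1/d}$, and feeding $|\nabla Rc|\le c|\nabla Rm|\le CR^{1-1/d}$ (the derivative estimate at the single exponent $a=1-1/d$) back into the Munteanu--Wang inequality gives $|Rm|\le CR$ with no logarithm. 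Your reduction to the Weyl tensor via $|Rm|^2=|W|^2+2|E|^2+\tfrac16R^2$ is harmless but does not help, because the difficulty was never in the Ricci part.
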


\begin{remark} As mentioned above, P.-Y. Chan \cite{Chan2} already proved the curvature estimate $|Rm|\le C$ as a special case. 
\end{remark}

\begin{remark} In Theorem 1.1, if we only assume $Rc\geq 0$ outside some compact set $K\subset M$,  then  estimates (1.3) and (1.4) are valid on $M\setminus K$ (see Theorem 5.1). 
\end{remark}

By using Theorem 1.1, we can obtain the following useful result about asymptotic curvature behavior for $4$-dimensional gradient expanding Ricci solitons with $Rc\geq 0$. 

\begin{theorem} Let $(M^4, g, f)$ be a  $4$-dimensional complete noncompact gradient expanding Ricci soliton with nonnegative Ricci curvature $Rc\ge 0$. Assume it has finite asymptotic scalar curvature ratio 
\begin{equation}
 \limsup_{r\to \infty} R  r^2< \infty, 
\end{equation}
where $r=r(x)$ is the distance function to a fixed base point in $M$. 
Then $(M^4, g, f)$ has finite {\em asymptotic curvature ratio }
\begin{equation}
A := \limsup_{r\to \infty} |Rm| r^2< \infty . 
\end{equation}
\end{theorem}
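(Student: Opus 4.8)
The plan is to derive Theorem 1.2 directly from Theorem 1.1, the finite asymptotic scalar curvature ratio hypothesis (1.5), and a dichotomy on the decay rate of the scalar curvature. Since $Rc\ge 0$, Theorem 1.1 applies and furnishes both the family of estimates $|Rm|\le \frac{C}{1-a}R^{a}$ valid for every $0\le a<1$, and---under the additional polynomial-decay hypothesis---the sharp linear bound $|Rm|\le CR$. Moreover (1.5) provides constants $A_0<\infty$ and $r_0>0$ with $R(x)\le A_0\, r(x)^{-2}$ whenever $r(x)\ge r_0$. Thus the task reduces to upgrading this quadratic-decay upper bound on $R$ to a quadratic-decay upper bound on $|Rm|$, up to a constant.

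I would split the argument according to how fast $R$ decays at infinity. If $R$ has at most polynomial decay, then estimate (1.4) applies and gives $|Rm|\le CR$ on $M^4$; multiplying by $r^2$ and passing to the limit superior yields
\[
A=\limsup_{r\to\infty}|Rm|\, r^2\le C\,\limsup_{r\to\infty}R\, r^2\le C A_0<\infty,
\]
which is exactly (1.6). If instead $R$ decays faster than any polynomial, I would fix a single exponent $a\in(0,1)$ and use (1.3), namely $|Rm|\le C_a R^{a}$. Super-polynomial decay means that for every $N>0$ there is a constant $C_N$ with $R\le C_N r^{-N}$ outside a compact set; choosing $N>2/a$ gives
\[
|Rm|\, r^2\le C_a R^{a}\, r^2\le C_a C_N^{\,a}\, r^{\,2-aN}\longrightarrow 0,
\]
so that in fact $A=0$ here. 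In either regime $A<\infty$, proving the theorem.

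The point that requires the most care---and the reason both halves of Theorem 1.1 are genuinely needed---is that the weaker estimate (1.3) can never by itself control $A$ once $R$ decays only polynomially: substituting $R\le A_0 r^{-2}$ into $|Rm|\le C_a R^{a}$ gives merely $|Rm|\, r^2\le C_a A_0^{a}\, r^{\,2-2a}$, whose exponent $2-2a$ is strictly positive for every admissible $a<1$ and hence blows up. This is precisely where the sharp bound $|Rm|\le CR$ must be invoked, and that bound is available only under the polynomial-decay assumption, i.e.\ exactly the regime of the first case; conversely, in the second case the gain from the super-polynomial decay of $R$ overwhelms the loss incurred by replacing $R$ with $R^{a}$. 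The main obstacle is therefore organizational rather than computational: one must match the dichotomy to the two estimates so that the correct one is used in each regime, and verify that the negation of \emph{at most polynomial decay} indeed supplies the uniform bound $R\le C_N r^{-N}$ needed to run the fast-decay case. I note that this dichotomy is exactly what circumvents the absence of a scalar curvature polynomial decay lower bound for expanders flagged in the introduction.
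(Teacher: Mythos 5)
There is a genuine gap: your two cases are not exhaustive, so the dichotomy does not cover all solitons satisfying the hypotheses. In the paper, ``$R$ has at most polynomial decay'' means (see the proof of Proposition 3.5) that there exist $C>0$ and $d\ge 1$ with $R\ge C\,r^{-d}$ \emph{everywhere} outside a compact set --- a uniform polynomial \emph{lower} bound. Its negation is only that no such uniform lower bound exists, i.e.\ for every $C,d$ there is a sequence $x_k\to\infty$ with $R(x_k)<C\,r(x_k)^{-d}$. This does \emph{not} yield the uniform upper bound $R\le C_N r^{-N}$ outside a compact set that your second case requires: the scalar curvature could decay like $r^{-2}$ along one sequence of points and like $e^{-r}$ along another, in which case it has neither a polynomial lower bound (so (1.4) is unavailable) nor a super-polynomial upper bound (so your limit computation fails). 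You flag exactly this verification at the end of your write-up, but it is precisely the step that is false as stated, and the argument collapses in this intermediate regime. A repair is possible if you make the dichotomy \emph{pointwise} rather than global: at each $x$ far out, either $R(x)\ge |\nabla f|(x)^{-3}$, in which case the pointwise computation in the proof of Proposition 3.5 (via Proposition 3.1 and Theorem 3.2 with $a=2/3$) gives $|Rm|(x)\le C R(x)\le C' r(x)^{-2}$, or $R(x)<|\nabla f|(x)^{-3}\le C r(x)^{-3}$, in which case $|Rm|(x)\le C_{2/3}R(x)^{2/3}\le C'' r(x)^{-2}$. But that requires reopening the proof of (1.4) rather than quoting it as a black box.

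For comparison, the paper's actual proof avoids any dichotomy. It applies the Munteanu--Wang inequality $|Rm|\le A_0\bigl(|Rc|+|\nabla Rc|/|\nabla f|\bigr)$ directly: the hypothesis (1.5) together with $Rc\ge 0$ gives $|Rc|\le R\le C_1 r^{-2}$; Theorem 1.1 with $a=1/2$ gives $|\nabla Rc|\le c|\nabla Rm|\le CR^{1/2}\le C_2 r^{-1}$; and $|\nabla f|^2=-f-R$ grows like $r^2$ by Lemma 2.2, so the quotient term is also $O(r^{-2})$. Each factor contributes exactly the decay needed, with no case analysis. The moral is that the derivative estimate in Theorem 1.1 (not just the $|Rm|$ estimate) is the ingredient that converts quadratic decay of $R$ into quadratic decay of $|Rm|$; your proposal only ever invokes the $|Rm|$ half of Theorem 1.1, which, as you correctly observe, loses a factor of $r^{2-2a}$ and cannot close the argument on its own.
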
 

\begin{remark} In \cite{CLX}, joint with J. Xie, we investigate curvature estimates for higher dimensional complete expanding Ricci solitons with nonnegative Ricci curvature and finite asymptotic scalar curvature ratio. 
\end{remark}

Chen and Deruelle \cite{ChenDer} have shown the existence of a $C^{1, \alpha}$ ($0<\alpha<1$) asymptotic cone structure at infinity for $n$-dimensional complete noncompact gradient expanding solitons with finite asymptotic curvature ratio $A <\infty$. 
Note that $A$ is scaling invariant and independent of the choice of any base point. 
By combining Theorem 1.2 with the result of Chen and Deruelle (Theorem 1.2, \cite{ChenDer}), we get 

\medskip
\noindent {\bf Corollary 1.3.} {\it Let $(M^4, g, f)$ be a $4$-dimensional complete noncompact non-flat 
gradient expanding Ricci soliton with nonnegative Ricci curvature $Rc\ge 0$ and finite asymptotic scalar curvature ratio.  Then $(M^4, g, f)$ has a $C^{1, \alpha}$ asymptotic cone structure at infinity, for any $\alpha \in (0, 1)$.} 

\medskip
Finally, we can also prove certain curvature estimates for $4$-dimensional complete gradient expanding solitons with bounded, positive scalar curvature and proper potential functions.

\medskip
\noindent {\bf Theorem 1.4.} {\it Let $(M^4, g, f)$ be a $4$-dimensional complete noncompact gradient expanding Ricci soliton with bounded and positive scalar curvature  $0<R\leq R_0$. Assume $f$ is proper so that 
$ \lim_{x\to \infty} f(x)=-\infty$. 
Then, for any $\alpha \in (0, \frac 12)$, we have
$$ |Rm|  \leq C_{\alpha} R^{\alpha} \quad {\mbox{and}} \quad |\nabla Rm|\leq C_{\alpha} R^{\alpha} \quad on \ M^4 $$
for some positive constant $C_{\alpha}>0$, with $C_{\alpha}\to \infty$ as $\alpha \to 1/2$. }

\medskip
The paper is organized as follows. In Section 2, we review some basic facts and collect several useful lemmas about expanding solitons that will be used later in the paper. In Section 3, we present the proofs of the main curvature estimates as stated in Theorem 1.1 and Theorem 1.2.  The proof of Theorem 1.4 is carried out in Section 4. 
Finally,  in Section 5, we briefly describe the extensions of Theorem 1.1 and Theorem 1.2 to $4$-dimensional complete gradient expanding solitons with nonnegative Ricci curvature outside a compact set and raise some questions.

\medskip 
\noindent {\bf Acknowledgements.} We would like to thank Pak-Yeung Chan, Chih-Wei Chen, Alix Deruelle, Ovidiu Munteanu, Jiaping Wang, Junming Xie and Detang Zhou for their interests and their helpful comments on an earlier version of the paper.  We also thank the referee for carefully reading our paper and providing helpful suggestions.

\section{Preliminaries}

In this section, we shall fix notations, recall some basic facts, and collect several known results about gradient expanding Ricci solitons. 

Throughout the paper, we denote by $$Rm=\{R_{ijkl}\}, \quad Rc=\{R_{ik}\},\quad R $$ the Riemann curvature tensor, the Ricci tensor, and the scalar curvature of the metric $g=g_{ij}dx^idx^j$ in local coordinates $(x^1, \cdots, x^n)$, respectively.

\begin{lemma} {\bf (Hamilton \cite{Ha95F})} Let $(M^n, g, f)$
be a complete gradient expanding Ricci soliton satisfying Eq. (1.1).
Then
\begin{equation}
 R+\Delta f =-\frac n 2 , 
\end{equation}
\begin{equation}
\nabla_iR=2R_{ij}\nabla_jf ,  
\end{equation}
\begin{equation}
R+|\nabla f|^2=-f +C_0  
\end{equation}
for some constant $C_0$.
\end{lemma}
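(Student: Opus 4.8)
The plan is to derive the three identities in sequence, working directly from the soliton equation $R_{ij}+\nabla_i\nabla_j f=-\frac12 g_{ij}$ and invoking only two standard tensor facts: the twice-contracted second Bianchi identity and the Ricci identity for commuting covariant derivatives. First I would obtain (2.1) by tracing the soliton equation against $g^{ij}$. Since $g^{ij}R_{ij}=R$, $g^{ij}\nabla_i\nabla_j f=\Delta f$, and $g^{ij}g_{ij}=n$, the trace reads immediately $R+\Delta f=-\frac n2$.

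Next, for (2.2), I would take the divergence of the soliton equation, applying $\nabla_j$ (contracted via $g^{jk}$) to both sides. The right-hand side is parallel and drops out, leaving $\nabla_j R_{ij}+\nabla_j\nabla_i\nabla_j f=0$. I would then rewrite the two terms separately: the first via the twice-contracted second Bianchi identity $\nabla_j R_{ij}=\tfrac12\nabla_i R$, and the second by commuting the outer derivative past the Hessian using the Ricci identity, which yields $\nabla_j\nabla_i\nabla_j f=\nabla_i\Delta f+R_{ij}\nabla_j f$. Substituting $\Delta f=-\frac n2-R$ from (2.1) gives $\nabla_i\Delta f=-\nabla_i R$, so the relation becomes $\tfrac12\nabla_i R-\nabla_i R+R_{ij}\nabla_j f=0$, which rearranges to $\nabla_i R=2R_{ij}\nabla_j f$.

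Finally, for (2.3), I would show that the function $R+|\nabla f|^2+f$ has vanishing gradient. Differentiating the squared norm gives $\nabla_i|\nabla f|^2=2(\nabla_i\nabla_j f)\nabla_j f$, and the soliton equation converts the Hessian into $\nabla_i\nabla_j f=-\frac12 g_{ij}-R_{ij}$, so that $\nabla_i|\nabla f|^2=-\nabla_i f-2R_{ij}\nabla_j f$. Adding $\nabla_i R=2R_{ij}\nabla_j f$ from (2.2) cancels the Ricci terms and leaves $\nabla_i\bigl(R+|\nabla f|^2\bigr)=-\nabla_i f$, i.e. $\nabla_i\bigl(R+|\nabla f|^2+f\bigr)=0$. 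Since $M$ is connected, this function equals a constant $C_0$, which is exactly (2.3).

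The only step that demands genuine care is the commutation in (2.2): the Ricci identity applied to the third covariant derivative of $f$ produces precisely one Ricci-curvature term, and fixing its sign and contraction correctly — so that it survives the cancellation against the Bianchi contribution rather than being killed by it — is where the bookkeeping must be done attentively. The remaining manipulations are just trace-taking, substitution from the previously established identities, and the observation that a function with zero gradient on a connected manifold is constant.
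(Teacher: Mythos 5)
Your derivation is correct, and all three identities are obtained exactly as in the standard argument (trace the soliton equation; take its divergence and use the contracted second Bianchi identity plus the commutation formula; then show $R+|\nabla f|^2+f$ has vanishing gradient). The paper itself gives no proof of Lemma 2.1, simply citing Hamilton, and your computation reproduces that classical proof with the signs and contractions handled correctly.
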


Furthermore, replacing $f$ by $f-C_0$, we can normalize the potential function $f$ in (2.3) so that 
\begin{equation}
R+|\nabla f|^2=-f .  
\end{equation}
In the rest of the paper, we shall always assume this normalization (2.4).

Next, we have the following well-known fact about the asymptotic behavior of the potential function of a complete non-compact gradient expanding soliton with nonnegative Ricci curvature (see, e.g., Lemma 5.5 in \cite{Cao et al} or Lemma 2.2 in \cite{ChenDer}).

\begin{lemma}  Let $(M^n, g , f)$ be a complete noncompact gradient expanding Ricci soliton satisfying (1.1) and with nonnegative Ricci curvature $Rc \ge 0$. Then, there exist some constant $c_1 >0$ such that, outside some compact subset of $M^n$, the potential function $f$ satisfies the estimates
\begin{equation}
\frac 1 4(r(x)-c_1)^2\le -f(x)\le \frac 1 4 (r(x)+2\sqrt{-f(x_0)})^2 ,  
\end{equation}
where $r(x)$ is the distance function from some fixed base point in $M^n$. In particular, $-f$ is a strictly convex exhaustion function achieving its minimum at a unique interior point $x_0$, which we shall take as the base point, and the underlying manifold $M^n$ is diffeomorphic to ${\mathbb R}^n$.

\end{lemma}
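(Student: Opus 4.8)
The plan is to reduce everything to convexity by working with $u=-f$. The soliton equation (1.1) gives $\nabla^2 u = Rc + \tfrac12 g \ge \tfrac12 g$, so $u$ is strictly convex with a uniform Hessian lower bound; and the normalization (2.4), together with $Rc\ge 0$ (hence $R\ge 0$), gives $|\nabla u|^2 = u - R \le u$, i.e. the gradient estimate $|\nabla u|\le\sqrt{u}$. These two inequalities are the only inputs I would use.

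First I would establish that $u$ is a proper exhaustion function with a unique critical point. Along any unit-speed geodesic $\gamma$, the function $h(t)=u(\gamma(t))$ satisfies $h''(t)=\nabla^2 u(\gamma',\gamma')\ge\tfrac12$, so $h(t)\ge h(0)+h'(0)\,t+\tfrac14 t^2$. Fixing a point $p$ and letting $\gamma$ run over minimizing geodesics issuing from $p$ (which reach every $x$ by completeness), and using $h'(0)=\langle\nabla u(p),\gamma'(0)\rangle\ge -|\nabla u(p)|$, this yields $u(x)\ge u(p)-|\nabla u(p)|\,r(x)+\tfrac14 r(x)^2\to\infty$. Thus $u$ is proper and bounded below, so it attains its minimum at some $x_0$ with $\nabla u(x_0)=0$; strict convexity rules out a second critical point, since a geodesic joining two critical points would carry a one-variable function with $h''\ge\tfrac12$ and two vanishing derivatives. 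Hence $x_0$ is the unique critical point and the global minimum, and I would take it as the base point.

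For the two-sided estimate I would integrate along a minimizing geodesic $\gamma$ from $x_0$ to $x$, so that $r(x)=t$ at the endpoint and $h'(0)=0$. The lower bound is immediate: $h(t)\ge u(x_0)+\tfrac14 t^2\ge\tfrac14 t^2$, which gives $-f(x)\ge\tfrac14 r(x)^2\ge\tfrac14(r(x)-c_1)^2$ outside a compact set (any $c_1>0$ works, and $c_1$ absorbs a change of base point if needed). For the upper bound I would exploit $|\nabla u|\le\sqrt u$: away from $x_0$ one has $u>0$, and $\frac{d}{dt}\sqrt{h(t)}=\frac{\langle\nabla u,\gamma'\rangle}{2\sqrt{u}}\le\frac{|\nabla u|}{2\sqrt{u}}\le\tfrac12$, so integrating gives $\sqrt{u(\gamma(t))}\le\sqrt{u(x_0)}+\tfrac12 t$, that is $-f(x)\le\tfrac14\bigl(r(x)+2\sqrt{-f(x_0)}\bigr)^2$.

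Finally, the diffeomorphism $M^n\cong\mathbb{R}^n$ follows from the standard fact that a complete manifold admitting a proper strictly convex function with a single (minimum) critical point is diffeomorphic to Euclidean space: the sublevel sets $\{u\le c\}$ are compact and geodesically convex, and the normalized gradient flow of $u$ deformation-retracts the level sets to a sphere and realizes $M\setminus\{x_0\}$ as $(0,\infty)\times S^{n-1}$. I expect the analytic estimates to be routine once convexity is in hand; the points requiring genuine care are the verification of properness and uniqueness of the minimum (both flowing from the Hessian lower bound rather than from any curvature decay), and the mild degeneracy of $\sqrt{u}$ at $x_0$ in the borderline case $u(x_0)=R(x_0)=0$, which is harmless since the upper bound is trivial at the base point and is only integrated for $t>0$.
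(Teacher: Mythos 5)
Your argument is correct and is essentially the standard proof of this well-known fact; the paper itself does not prove Lemma 2.2 but cites Lemma 5.5 of \cite{Cao et al} and Lemma 2.2 of \cite{ChenDer}, where exactly your two ingredients are used (integrating $\nabla^2(-f)\ge\frac12 g$ along minimizing geodesics for the lower bound and properness/uniqueness of the minimum, and integrating $|\nabla\sqrt{-f}|\le\frac12$, coming from $|\nabla f|^2=-f-R\le -f$, for the upper bound). Your handling of the degenerate case $-f(x_0)=0$ and of the Morse-theoretic conclusion $M^n\cong\mathbb{R}^n$ is also correct.
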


\begin{remark} In (2.5), only the lower bound on $-f$ requires the assumption of nonnegative Ricci curvature. Also, the level sets of $-f$ are diffeomorphic to ${\mathbb S}^{n-1}$. 
\end{remark}

Another useful fact is the boundedness of the scalar curvature of a gradient expanding soliton with nonnegative Ricci curvature. We recently noticed that this result was already observed in Ma-Chen \cite{MC} (see also \cite{DZ, Der17}), and its  proof essentially follows Hamilton's argument in proving Theorem 20.1 in \cite{Ha95F}. 

\begin{lemma} 
Let $(M^n, g, f)$ be a complete noncompact gradient expanding Ricci soliton with nonnegative Ricci curvature $Rc \ge 0$. Then its scalar curvature $R$ is bounded from above, i.e., 
\[R\le R_0 \quad \mbox{for some positive constant} \ R_0.\]
Moreover, $R>0$ everywhere unless  $(M^n, g, f)$ is the Gaussian expanding soliton.  
\end{lemma}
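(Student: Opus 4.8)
The plan is to convert the soliton identities of Lemma 2.1 into a single elliptic equation for $R$ and then run two maximum-principle arguments: one for the upper bound and the strong version for positivity. First I would compute the drift-Laplacian of $R$. Differentiating (2.2), using the contracted second Bianchi identity $\nabla_i R_{ij}=\frac12\nabla_j R$, and substituting $\nabla_i\nabla_j f=-\frac12 g_{ij}-R_{ij}$ from the soliton equation, one obtains
$$\Delta R-\langle\nabla R,\nabla f\rangle=-R-2|Rc|^2.$$
Since $Rc\ge 0$ forces $R\ge 0$, the right-hand side is $\le 0$; that is, $R$ is a supersolution of the drift operator $\Delta_f=\Delta-\langle\nabla f,\nabla\cdot\rangle$. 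This identity is the one common input for both parts.

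For the upper bound I would exploit that, again by (2.2), $\langle\nabla R,\nabla f\rangle=2\,Rc(\nabla f,\nabla f)\ge 0$, so $R$ is nondecreasing along the gradient flow of $f$. Simultaneously the soliton equation gives $\nabla^2(-f)=\frac12 g+Rc\ge\frac12 g$, so $-f$ is uniformly convex; by Lemma 2.2 it is a proper exhaustion attaining its minimum at the unique point $x_0$. Hence the forward gradient flow of $f$ decreases $-f$, stays inside a compact sublevel set, is forward-complete, and converges to $x_0$. Following the trajectory from an arbitrary point $p$ to $x_0$ and using the monotonicity of $R$ yields $R(p)\le R(x_0)$, so one may take $R_0:=R(x_0)=-f(x_0)$, the last equality from (2.4) since $\nabla f(x_0)=0$. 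This is the expanding analogue of Hamilton's steady-soliton argument.

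For the positivity statement I would apply the strong maximum principle to the supersolution inequality $\Delta_f R\le 0$. If $R(p)=0$ at some point $p$, then $R$ attains its minimum value $0$ at an interior point, hence $R\equiv 0$; feeding this back into the identity above forces $|Rc|^2\equiv 0$, so the soliton is Ricci-flat with $\nabla^2 f=-\frac12 g$. A Tashiro-type rigidity then identifies $(M^n,g)$ with flat $\mathbb{R}^n$ and $f$ with $-|x|^2/4$ up to an additive constant, i.e. the Gaussian expander. Thus either $R>0$ everywhere or the soliton is Gaussian.

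The computation of the elliptic equation is routine. The steps needing care are the completeness and convergence of the gradient flow and the final rigidity identification. The main obstacle is making the passage from pointwise monotonicity to a global bound rigorous on a noncompact manifold, where $\sup R$ need not a priori be attained; the uniform convexity $\nabla^2(-f)\ge\frac12 g$ is precisely what forces every trajectory into the compact core and pins the maximum of $R$ at $x_0$.
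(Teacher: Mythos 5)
Your proof is correct and follows essentially the route the paper itself relies on: the upper bound via monotonicity of $R$ along the integral curves of $\nabla f$ flowing into the unique critical point $x_0$ of the convex exhaustion $-f$ (this is exactly the adaptation of Hamilton's Theorem 20.1 argument that the paper attributes to Ma--Chen), and the positivity via the strong maximum principle applied to $\Delta_f R=-R-2|Rc|^2$ followed by the Ricci-flat rigidity of Petersen--Wylie to identify the Gaussian expander.
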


\begin{remark}
In \cite{MC}, the authors asserted that either $R>0$ or $(M^n, g)$ is Ricci flat.  However, a complete noncompact Ricci-flat gradient expanding Ricci soliton must be the Gaussian expanding soliton on ${\RR}^n$ (see, e.g., Proposition 3.1 in \cite{PW}). 
\end{remark}

Furthermore, by combining  (2.1) and (2.4), we obtain 
$$\Delta f -|\nabla f|^2=f-\frac n 2.$$ Thus, setting
\begin{equation}
F=-f+\frac n 2 , 
\end{equation}
we have 
\begin{equation}
 |\na F|^2=F-R-\frac n 2 \qquad \mbox{and}  
\end{equation}
\begin{equation}
 \Delta_f F =F \qquad ({\mbox {i.e.}},  \ \Delta_f f=f-\frac n 2) , 
\end{equation}
where $\Delta_f =:\Delta -\nabla f\cdot \nabla$ is the weighted Laplace operator.

Note that, under the assumption of Lemma 2.2, the function $F(x)$ defined in (2.6) is proportional to the square of the distance function $r^2(x)$ from above and below at large distances. In the rest of the paper, we denote by
\begin{eqnarray*}
D(t)&=& \{x\in M : F(x) \leq t \}, \quad {\mbox{and}} \\ \Sigma(t) &=&\{x\in M : F(x)=t \}.
\end{eqnarray*}

We now collect several well-known differential identities on the curvatures $R, Rc$ and $Rm$ that we shall use later. 

\begin{lemma} Let $(M^n, g, f)$
be a gradient expanding Ricci soliton satisfying Eq. (1.1). Then, we have
\begin{eqnarray*}
\Delta_{f} R &=&-R-2|Rc|^2,\\
\Delta_{f} R_{ik} &=&-R_{ik} -2R_{ijkl}R_{jl},\\
\Delta_{f} {Rm} &=&  -Rm+ Rm\ast Rm,\\
\na_lR_{ijkl} &=& \na_jR_{ik}-\na_i R_{jk}=R_{ijkl}\na_lf, 
\end{eqnarray*}
where,  on the right hand side of the third equation, $Rm\ast Rm$ denotes the sum of a finite number of terms involving quadratics in $Rm$.
\end{lemma}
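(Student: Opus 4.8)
The plan is to derive all four identities from the soliton equation (1.1), the first-order identities of Lemma 2.1, and the second Bianchi identity, commuting covariant derivatives whenever necessary. I would begin with the last line, since the first three identities all rely on it. The first equality $\nabla_l R_{ijkl} = \nabla_j R_{ik} - \nabla_i R_{jk}$ is just the twice-contracted second Bianchi identity and holds on any Riemannian manifold. For the second equality I would differentiate the soliton equation written as $\nabla_i \nabla_k f = -R_{ik} - \frac12 g_{ik}$: applying $\nabla_j$ and $\nabla_i$ separately and subtracting gives $\nabla_j R_{ik} - \nabla_i R_{jk} = -(\nabla_j \nabla_i - \nabla_i \nabla_j)\nabla_k f$, and the right-hand side is precisely the commutator of covariant derivatives acting on the one-form $\nabla f$, which by the Ricci identity equals $R_{ijkl}\nabla_l f$.

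Next I would establish the scalar identity $\Delta_f R = -R - 2|Rc|^2$, which is the quickest. Taking the divergence of (2.2), namely $\nabla_i R = 2 R_{ij}\nabla_j f$, yields $\Delta R = 2(\nabla_i R_{ij})\nabla_j f + 2 R_{ij}\nabla_i\nabla_j f$. Substituting the contracted second Bianchi identity $\nabla_i R_{ij} = \frac12 \nabla_j R$ into the first term produces $\nabla f \cdot \nabla R$, while substituting the soliton equation $\nabla_i\nabla_j f = -R_{ij} - \frac12 g_{ij}$ into the second term produces $-2|Rc|^2 - R$. Moving the $\nabla f \cdot \nabla R$ term to the left then gives $\Delta_f R = \Delta R - \nabla f \cdot \nabla R = -R - 2|Rc|^2$.

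The Ricci identity $\Delta_f R_{ik} = -R_{ik} - 2R_{ijkl}R_{jl}$ requires more care. I would take $\nabla_j$ of the Bianchi identity $\nabla_l R_{ijkl} = \nabla_j R_{ik} - \nabla_i R_{jk}$, summing over $j$, to express $\Delta R_{ik}$ in terms of $\nabla_j\nabla_l R_{ijkl}$ and $\nabla_j\nabla_i R_{jk}$. I would then evaluate $\nabla_j\nabla_l R_{ijkl}$ using the soliton form $\nabla_l R_{ijkl} = R_{ijkl}\nabla_l f$ together with $\nabla_j\nabla_l f = -R_{jl} - \frac12 g_{jl}$, which contributes the quadratic term $-R_{ijkl}R_{jl}$ and part of the linear term, and commute the derivatives in $\nabla_j\nabla_i R_{jk}$ via the Ricci identity, which produces further curvature terms quadratic in $Rm$ and $Rc$. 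Collecting everything and absorbing the $\nabla f$-contractions into $\Delta_f$ yields the stated formula; alternatively one recognizes it as the static version of the Lichnerowicz-Laplacian evolution of $Rc$ along the Ricci flow. The final identity $\Delta_f Rm = -Rm + Rm \ast Rm$ follows in the same spirit—differentiating the Bianchi identity $\nabla_l R_{ijkl} = R_{ijkl}\nabla_l f$ once more and commuting derivatives—with all quadratic curvature contributions absorbed into the schematic term $Rm \ast Rm$.

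I expect the main obstacle to be purely computational bookkeeping in the last two identities: commuting covariant derivatives correctly and tracking every curvature term produced by the Ricci identity, while keeping the constants and signs consistent with the expanding normalization $\lambda = -\frac12$. The conceptual input is entirely standard—soliton equation, Bianchi identities, and commutation formulas—so no genuinely new idea is needed beyond careful index manipulation.
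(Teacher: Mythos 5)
Your derivation is correct and is the standard one; the paper itself states Lemma 2.4 without proof, citing these as well-known identities, so there is nothing to compare against beyond noting that your route (soliton equation plus contracted second Bianchi identity plus Ricci commutation formulas) is exactly how these are proved in the literature (e.g., in the shrinking and steady analogues in \cite{MW} and \cite{CaoCui}). One trivial quibble: the identity $\na_lR_{ijkl}=\na_jR_{ik}-\na_iR_{jk}$ is the \emph{once}-contracted second Bianchi identity, while the twice-contracted version $\na_iR_{ij}=\tfrac12\na_jR$ is the one you correctly invoke in the scalar curvature computation.
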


Based on Lemma 2.4,  one can easily derive the following differential inequalities  (see also \cite{MW, CaoCui} for the shrinking and steady ones, respectively):

\begin{lemma} Let $(M^n, g, f)$ be a complete gradient expanding soliton satisfying Eq. (1.1). Then
\begin{eqnarray*}
\Delta_{f} |Rc|^2 & \ge & 2|\na Rc|^2-2|Rc|^2-4|Rm| |Rc|^2, \\
\Delta_{f}|Rm|^2  &\ge & 2|\na Rm|^2 - 2|Rm|^2-c|Rm|^3,\\
\Delta_{f} |Rm| &\ge & -|Rm|-c|Rm|^2.
\end{eqnarray*}
Here $c>0$ is some universal constant depending only on the dimension $n$.
\end{lemma}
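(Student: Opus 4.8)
The plan is to derive all three inequalities directly from the differential identities in Lemma 2.4 by means of the standard Bochner-type formula for the weighted Laplacian $\Delta_f$. The key starting point is that, for any tensor field $T$, one has
$$\Delta_f |T|^2 = 2\langle \Delta_f T, T\rangle + 2|\na T|^2 .$$
This follows at once from the unweighted identity $\Delta |T|^2 = 2\langle \Delta T, T\rangle + 2|\na T|^2$ (the usual computation for the rough Laplacian $\Delta = \na^i\na_i$) together with $\na f\cdot\na|T|^2 = 2\langle \na_{\na f}T, T\rangle$ and the definition $\Delta_f = \Delta - \na f\cdot\na$. With this formula in hand, the first two inequalities reduce to substituting the relevant identity from Lemma 2.4 and estimating a cubic curvature term.

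For the Ricci tensor I would apply the formula with $T = Rc$ and insert $\Delta_f R_{ik} = -R_{ik} - 2R_{ijkl}R_{jl}$, which gives the exact identity
$$\Delta_f |Rc|^2 = 2|\na Rc|^2 - 2|Rc|^2 - 4R_{ik}R_{ijkl}R_{jl} .$$
The cubic term is then controlled by the pointwise Cauchy--Schwarz bound $|R_{ik}R_{ijkl}R_{jl}| \le |Rm|\,|Rc|^2$, yielding the first inequality. For the full curvature tensor I would take $T = Rm$ and use $\Delta_f Rm = -Rm + Rm\ast Rm$; since $Rm\ast Rm$ is a universal quadratic contraction of $Rm$ with itself, the inner product $\langle Rm\ast Rm, Rm\rangle$ is bounded by $c\,|Rm|^3$ for a constant $c$ depending only on $n$, and one obtains the second inequality.

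The third inequality is the only step requiring extra care, and is where the main (though still routine) obstacle lies, namely passing from $|Rm|^2$ to the non-smooth quantity $|Rm|$. Away from the zero set of $Rm$ I would combine the identity $\Delta_f |Rm|^2 = 2|Rm|\,\Delta_f|Rm| + 2|\na|Rm||^2$ with Kato's inequality $|\na|Rm||^2 \le |\na Rm|^2$ and the second inequality just established; the $|\na Rm|^2$ contributions then cancel, and dividing by $2|Rm|$ produces $\Delta_f|Rm| \ge -|Rm| - c|Rm|^2$. To interpret this estimate across the points where $Rm$ vanishes, I would either read it in the barrier (viscosity) sense or, equivalently, work with the regularized quantity $\sqrt{|Rm|^2 + \eps^2}$ and let $\eps \to 0$, which removes the apparent singularity in a standard way and completes the proof.
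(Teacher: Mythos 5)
Your proof is correct and is exactly the derivation the paper has in mind: the paper omits the proof of Lemma 2.5, saying only that it follows from Lemma 2.4, and its accompanying remark points to the Kato inequality $|\na|Rm||\le|\na Rm|$ for the third estimate, which is precisely how you obtain it. Your extra care at the zero set of $Rm$ (barrier sense or regularizing by $\sqrt{|Rm|^2+\eps^2}$) is a sound and standard way to make that step rigorous.
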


\begin{remark} To derive the third  inequality, one needs to use the Kato inequality
$|\nabla |Rm||\le |\nabla Rm|$.
\end{remark}

Finally,  we have the following differential inequalities on the covariant derivative $\na Rm$ of the curvature tensor (see \cite{MW} for the shrinking case). 

\begin{lemma} Let $(M^n, g, f)$ be a complete gradient expanding Ricci soliton satisfying Eq. (1.1). Then
\begin{eqnarray*}
\Delta_{f} |\na Rm|^2 &\ge & 2|\na^2 Rm|^2  -3|\na Rm|^2-c|Rm| |\na Rm|^2 \quad \mbox{and} \\
\Delta_{f} |\na Rm| &\ge & -\frac 3 2|\na Rm|-c|Rm||\na Rm|,
\end{eqnarray*}
where $c>0$ is some universal constant depending only on the dimension $n$.
\end{lemma}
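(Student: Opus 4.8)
The plan is to first derive a clean weighted evolution equation for the tensor $\nabla Rm$ itself, namely
$$\Delta_f \nabla Rm = -\tfrac{3}{2}\,\nabla Rm + Rm \ast \nabla Rm,$$
and then feed this into the weighted Bochner formula and Kato's inequality to obtain the two stated estimates. The whole point is to track the linear coefficient and to verify that no uncontrolled $\nabla f$ factors survive.

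First I would differentiate the third identity of Lemma 2.4, $\Delta_f Rm = -Rm + Rm\ast Rm$, and commute $\nabla$ past the weighted Laplacian. Writing $\Delta_f = \Delta - \nabla f\cdot\nabla$, there are two commutators. Commuting $\nabla$ with the rough Laplacian $\Delta$ produces, via the Ricci identities, only terms of the schematic type $Rm\ast\nabla Rm$ (together with $\nabla Rm\ast Rm$, which is of the same type). Commuting $\nabla$ with the drift $\nabla f\cdot\nabla$ produces a Hessian term $-(\nabla^2 f)\ast\nabla Rm$ together with a term of the form $\nabla_k f\,(Rm\ast Rm)$. Here I would substitute the soliton equation (1.1) as $\nabla^2 f = -\tfrac12 g - Rc$, so the Hessian term contributes exactly $+\tfrac12\nabla Rm$ plus a harmless $Rc\ast\nabla Rm$.

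The one genuinely delicate point is the term $\nabla_k f\,(Rm\ast Rm)$, which a priori carries an unbounded factor $\nabla f$. The key is to invoke the contracted second-Bianchi-type identity $R_{ijkl}\nabla_l f = \nabla_j R_{ik}-\nabla_i R_{jk}$ from the last line of Lemma 2.4, which converts every $\nabla f$ contracted against a curvature into $\nabla Rc$, hence into a term of type $\nabla Rm\ast Rm$. After this substitution no $\nabla f$ survives. Collecting the terms linear in $\nabla Rm$ ($-1$ from differentiating $-Rm$, and $-\tfrac12$ from the Hessian) yields the coefficient $-\tfrac32$, while all remaining terms are absorbed into $Rm\ast\nabla Rm$. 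This is the step I expect to be the main obstacle: it is purely a bookkeeping computation, but one must be careful to see that the $\nabla f$ factors cancel exactly and that the constant is $-3/2$ rather than the naive $-1$.

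Finally I would apply the weighted Bochner identity $\Delta_f|S|^2 = 2\langle\Delta_f S, S\rangle + 2|\nabla S|^2$ with $S=\nabla Rm$. Substituting the evolution equation gives
$$\Delta_f |\nabla Rm|^2 = 2|\nabla^2 Rm|^2 - 3|\nabla Rm|^2 + 2\langle Rm\ast\nabla Rm,\nabla Rm\rangle,$$
and estimating the last term by $c|Rm|\,|\nabla Rm|^2$ yields the first inequality. For the second, I would use Kato's inequality $|\nabla|\nabla Rm||\le|\nabla^2 Rm|$ in the pointwise identity $\Delta_f|\nabla Rm| = \big(\Delta_f|\nabla Rm|^2 - 2|\nabla|\nabla Rm||^2\big)/(2|\nabla Rm|)$, valid where $|\nabla Rm|>0$; the $|\nabla^2 Rm|^2$ contributions cancel and dividing by $2|\nabla Rm|$ produces $\Delta_f|\nabla Rm|\ge -\tfrac32|\nabla Rm| - c|Rm|\,|\nabla Rm|$, as claimed.
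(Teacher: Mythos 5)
Your proposal follows the paper's proof essentially verbatim: derive $\Delta_f(\nabla Rm)=-\tfrac32\nabla Rm+Rm\ast\nabla Rm$ by commuting derivatives, using the identity $R_{ijkl}\nabla_l f=\nabla_jR_{ik}-\nabla_iR_{jk}$ from Lemma 2.4 to eliminate the stray $\nabla f$ factor and the soliton equation for the Hessian term, then conclude via the weighted Bochner formula and Kato's inequality. The only blemish is a sign slip in your intermediate description of the Hessian contribution (you first write that it gives $+\tfrac12\nabla Rm$), but your final tally of $-1-\tfrac12=-\tfrac32$ is the correct one and matches the paper.
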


\begin{proof} First of all, by commuting covariant differentiations and Lemma 2.4, 
\begin{eqnarray*} 
\Delta_{f} (\na_q Rm )& = & \na_p\na_p (\na_q Rm)-\na_p(\na_q Rm)\cdot\na_p f \\
&=& \na_p \big(\na_q \na_p Rm +Rm\ast Rm\big)-\big(\na_q\na_p Rm+R_{pq\bullet\bullet}Rm\big)\cdot\na_p f\\
&=&   \na_q (\Delta  Rm) -\na_q(\na_p Rm \na_p f)+(\na_p Rm) (\na_q\na_p f) +Rm\ast \na_q Rm \\
&=&  \na_q (\Delta_f  Rm) -\frac 1 2 \na_q Rm +Rm\ast \na_q Rm\\
&=& -\frac 3 2 \na_q Rm +Rm\ast \na_q Rm,
\end{eqnarray*}
where we have used the 4th identity in Lemma 2.4  in the third equality and the expanding soliton equation (1.1) in the 4th equality.  Thus, 
\begin{eqnarray*} 
\Delta_{f} |\na Rm|^2 & = & 2|\na^2 Rm|^2 +2 (\na Rm) \Delta_{f} (\na Rm)\\
 &\ge & 2|\na^2 Rm|^2 -3  |\na Rm|^2 -c|Rm|  |\na Rm|^2.
\end{eqnarray*}
This proves the first inequality. The second inequality follows easily from the first one and Kato's inequality. 
\end{proof}

\section{The proof of Main Results}

First of all, we need the following key fact, valid for all $4$-dimensional gradient Ricci solitons, due to Munteanu and Wang \cite{MW} (see also Lemma 1 in \cite{Chan1}).

\begin{proposition}{\bf(Munteanu-Wang \cite{MW})} Let $(M^4, g, f)$ be a $4$-dimensional complete noncompact gradient Ricci soliton.
Then, there exists some universal constant $A_0>0$ such that, at any point where $\na f\neq 0$, 
$$ |Rm|\le A_0\left(|Rc|+ \frac {|\nabla Rc|} {|\nabla f|} \right). $$
\end{proposition}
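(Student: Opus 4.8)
The plan is to argue pointwise at a fixed $p\in M^4$ with $\na f(p)\neq 0$, estimating each component of the curvature tensor in a frame adapted to $\na f$. I would choose an orthonormal frame $\{e_1,e_2,e_3,e_4\}$ at $p$ with $e_4=\na f/|\na f|$, so that $\na_l f=|\na f|\,\delta_{l4}$ in this frame. Since $|Rm|^2=\sum_{i,j,k,l}R_{ijkl}^2$ is a sum of finitely many (twenty independent) components, it suffices to bound each $R_{ijkl}$ by a universal multiple of $|Rc|+|\na Rc|/|\na f|$.

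The first step uses the contracted second Bianchi identity in soliton form, i.e.\ the fourth identity of Lemma 2.4, $R_{ijkl}\na_l f=\na_j R_{ik}-\na_i R_{jk}$. Contracting the last index with $e_4$ gives, for all $i,j,k$,
\[
R_{ijk4}=\frac{\na_j R_{ik}-\na_i R_{jk}}{|\na f|},\qquad\text{so}\qquad |R_{ijk4}|\le \frac{2|\na Rc|}{|\na f|}.
\]
By the symmetries $R_{ijkl}=-R_{jikl}=-R_{ijlk}=R_{klij}$, any component carrying at least one index equal to $4$ can be brought into this form; hence every such component is controlled by $|\na Rc|/|\na f|$.

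The decisive step, and the only place the dimension-four hypothesis is used, is to control the purely spatial components $R_{abcd}$ with $a,b,c,d\in\{1,2,3\}$. These constitute the curvature tensor of the $3$-dimensional subspace $\operatorname{span}\{e_1,e_2,e_3\}$, and in dimension three an algebraic curvature tensor is completely determined by its Ricci contraction $R^{(3)}_{ab}:=\sum_{c=1}^{3}R_{acbc}$ (with trace $R^{(3)}$) through the standard Weyl-free identity
\[
R_{abcd}=R^{(3)}_{ac}\delta_{bd}-R^{(3)}_{ad}\delta_{bc}+R^{(3)}_{bd}\delta_{ac}-R^{(3)}_{bc}\delta_{ad}-\frac{R^{(3)}}{2}\left(\delta_{ac}\delta_{bd}-\delta_{ad}\delta_{bc}\right).
\]
The key observation is that $R^{(3)}_{ab}$ differs from the ambient Ricci tensor only by a doubly-$4$ term, since $R_{ab}=\sum_{c=1}^4 R_{acbc}=R^{(3)}_{ab}+R_{a4b4}$, whence $R^{(3)}_{ab}=R_{ab}-R_{a4b4}$. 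Here $|R_{ab}|\le|Rc|$, while $R_{a4b4}$ carries a $4$-index and is bounded by $2|\na Rc|/|\na f|$ via Step 1. Substituting into the identity above bounds every purely spatial $R_{abcd}$ by a universal multiple of $|Rc|+|\na Rc|/|\na f|$.

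Combining the two steps and summing over the finitely many components yields $|Rm|\le A_0\left(|Rc|+|\na Rc|/|\na f|\right)$ for a universal $A_0>0$. The hard part, and the reason the statement is genuinely special to $n=4$, is precisely Step 2: the frame adapted to $\na f$ singles out a complement of dimension exactly three, and only in that dimension is the spatial curvature algebraically recoverable from its Ricci trace. In higher dimensions the complement orthogonal to $\na f$ has dimension $\ge 4$, its Weyl part is not captured by the Ricci contraction, and this elementary recovery argument breaks down.
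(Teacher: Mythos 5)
Your proof is correct and is essentially the argument of Munteanu--Wang: the paper states Proposition 3.1 without proof, citing \cite{MW} (and Lemma 1 of \cite{Chan1}), and the proof in that reference follows exactly your two-step scheme --- adapt the orthonormal frame so that $e_4=\na f/|\na f|$, control every component carrying a $4$-index via the identity $R_{ijkl}\na_l f=\na_j R_{ik}-\na_i R_{jk}$ from Lemma 2.4, and then recover the purely spatial components from their Ricci contraction $R^{(3)}_{ab}=R_{ab}-R_{a4b4}$ using the vanishing of the Weyl part of a three-dimensional algebraic curvature tensor. Your closing remark correctly identifies Step 2 as the only place where the hypothesis $n=4$ is used and why the argument fails in higher dimensions.
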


As a consequence of Proposition 3.1, Lemma 2.2 and Lemma 2.3, one gets the following

\begin{proposition} 
Let $(M^4, g, f)$ be a $4$-dimensional complete noncompact gradient expanding Ricci soliton with nonnegative Ricci curvature.
Then, for any constant $\Lambda >0$, there exists some constant $r_0>0$ (depending on $\Lambda$) such that 
\[|\na Rc|^2 \ge \frac {\Lambda} {2A_0^2} |Rm|^2 - \Lambda |Rc|^2 \quad \mbox{outside}\  D(r_0).\] 
\end{proposition}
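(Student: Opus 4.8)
The plan is to derive the inequality pointwise from the Munteanu--Wang estimate in Proposition 3.1, once the geometry guarantees that $|\nabla f|^2$ is large. First I would square the bound in Proposition 3.1 and apply the elementary inequality $(a+b)^2\le 2a^2+2b^2$ to obtain, at every point where $\nabla f\neq 0$,
$$ |Rm|^2 \le 2A_0^2\,|Rc|^2 + 2A_0^2\,\frac{|\nabla Rc|^2}{|\nabla f|^2}. $$
Rearranging this gives the fundamental inequality
$$ |\nabla Rc|^2 \ge |\nabla f|^2\left(\frac{|Rm|^2}{2A_0^2} - |Rc|^2\right), $$
which drives the whole argument.

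Next I would control $|\nabla f|^2$ from below outside a large sublevel set. By the normalization (2.4) we have $|\nabla f|^2=-f-R$; equivalently, from (2.6)--(2.7), $|\nabla f|^2=|\nabla F|^2=F-R-\tfrac n2$. Since Lemma 2.3 supplies the upper bound $R\le R_0$, on the region $M^4\setminus D(r_0)$ (where $F>r_0$) we get $|\nabla f|^2> r_0-R_0-\tfrac n2$. Hence, given $\Lambda>0$, choosing $r_0:=\Lambda+R_0+\tfrac n2$ forces $|\nabla f|^2>\Lambda$ throughout $M^4\setminus D(r_0)$; in particular $\nabla f\neq 0$ there, so Proposition 3.1 indeed applies.

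The final step is a sign case-analysis that converts the coefficient $|\nabla f|^2$ into the coefficient $\Lambda$. If $\tfrac{|Rm|^2}{2A_0^2}-|Rc|^2\ge 0$ at a given point, then since $|\nabla f|^2>\Lambda$ the fundamental inequality yields $|\nabla Rc|^2\ge \Lambda\bigl(\tfrac{|Rm|^2}{2A_0^2}-|Rc|^2\bigr)$, which is exactly the claim. If instead the bracket is negative, the target right-hand side $\tfrac{\Lambda}{2A_0^2}|Rm|^2-\Lambda|Rc|^2$ is itself negative, so the inequality holds trivially from $|\nabla Rc|^2\ge 0$.

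The argument is essentially immediate once the inputs are assembled; the one point requiring genuine care is the case distinction in the last step. The naive move---substituting $|\nabla f|^2>\Lambda$ directly into the fundamental inequality---fails on the $-|Rc|^2$ term, because a larger $|\nabla f|^2$ makes $-|\nabla f|^2|Rc|^2$ \emph{more} negative rather than less. The resolution is precisely the observation that the desired inequality is vacuous wherever the bracket is negative, and this is what dictates the particular asymmetric form $\tfrac{\Lambda}{2A_0^2}|Rm|^2-\Lambda|Rc|^2$ of the conclusion. I expect this to be the main (if modest) obstacle.
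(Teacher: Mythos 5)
Your proposal is correct and follows essentially the same route as the paper: square the Munteanu--Wang bound via $(a+b)^2\le 2a^2+2b^2$, use $|\nabla f|^2=|\nabla F|^2=F-R-\tfrac n2$ together with $R\le R_0$ to force $|\nabla f|^2>\Lambda$ outside $D(r_0)$, and then convert the coefficient. The only cosmetic difference is at the last step, where the paper avoids your sign case-analysis by writing $|\nabla Rc|^2\ge \Lambda\,\frac{|\nabla Rc|^2}{|\nabla f|^2}$ (multiplying the manifestly nonnegative ratio by $\Lambda\le|\nabla f|^2$) and then scaling the rearranged inequality by $\Lambda$.
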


\begin{proof} From Lemma 2.2, we know that $|\na f|(x)\neq 0$ for $x\neq x_0$. Thus, it follows from Proposition 3.1 that 
\[ |Rm|^2\le 2A_0^2 \left(|Rc|^2+ \frac {|\nabla Rc|^2} {|\nabla f|^2} \right) \qquad \mbox{on} \ M\setminus \{x_{0}\}.\]
Hence, for $x\neq x_0$, 
\begin{equation} 
\frac {|\nabla Rc|^2} {|\nabla f|^2} \ge \frac 1 {2A_0^2} |Rm|^2- |Rc|^2 . 
\end{equation}
On the other hand, by (2.6), (2.7), Lemma 2.2 and Lemma 2.3, 
\[|\na f|^2(x)= |\na F|^2(x)=F(x)-R(x)-\frac n2 \rightarrow \infty, \ \mbox{as} \  r(x)\to \infty.\]
Therefore, for any $\Lambda >0$, there exists constant $r_0>0$ such that $|\na f|^2\ge \Lambda$ outside the compact set $D(r_0)=\{F(x)\le r_0\}$, hence 
\[ |\na Rc|^2 \ge \Lambda \frac {|\nabla Rc|^2} {|\nabla f|^2} \ge  \frac {\Lambda} {2A_0^2} |Rm|^2 - \Lambda |Rc|^2.\]
\end{proof}

Now we recall the following result due to  P. Y. Chan  \cite{Chan2}.

\begin{proposition}  Let $(M^4, g, f)$ be a $4$-dimensional complete noncompact gradient expanding Ricci soliton with nonnegative Ricci curvature $Rc\ge 0$.
Then, there exists some constant $C>0$, depending on the constant $A_0$ (in Proposition 3.1), the scalar curvature upper bound $R_0$ (in Lemma 2.3) and the geometry of $D(r_0)$, such that
$$ \sup_{x\in M} |Rm|  \le C. $$
\end{proposition}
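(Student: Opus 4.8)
The plan is to bound $|Rm|$ by exploiting that $|Rc|$ is bounded while $|Rc|^2$ is forced to be strongly subharmonic (with respect to $\Delta_f$) wherever $|Rm|$ is large. Since $Rc\ge 0$ and $R\le R_0$ (Lemma 2.3), we have $|Rc|\le R\le R_0$ everywhere; as $|Rm|$ is continuous it is automatically bounded on the compact set $D(r_0)$, so it suffices to estimate $|Rm|$ on $M\setminus D(r_0)$. There, fixing $\Lambda$ large and combining the first inequality of Lemma 2.5 with Proposition 3.2, one has $2|\na Rc|^2\ge \frac{\Lambda}{A_0^2}|Rm|^2-2\Lambda|Rc|^2$, while $4|Rm||Rc|^2\le \frac{\Lambda}{2A_0^2}|Rm|^2+C(\Lambda,A_0,R_0)$ by Young's inequality; absorbing the lower-order terms via $|Rc|\le R_0$ yields
\[
\Delta_f |Rc|^2 \ \ge\ \frac{\Lambda}{2A_0^2}\,|Rm|^2 - C_1 \qquad \text{on } M\setminus D(r_0),
\]
for a constant $C_1=C_1(\Lambda,A_0,R_0)$.

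Next I would convert this into a pointwise bound in two stages. First, a local weighted $L^2$ estimate: testing the displayed inequality — together with the Caccioppoli inequality coming again from the $|Rc|^2$ inequality of Lemma 2.5 — against $\varphi^2 e^{-f}$ for a cutoff $\varphi$ supported in an annulus $D(t)\setminus D(r_0)$, integrating by parts via the self-adjointness of $\Delta_f$ with respect to $e^{-f}dV$, bounding $|\na|Rc|^2|\le 2R_0|\na Rc|$, and reabsorbing $\int\varphi^2|\na Rc|^2 e^{-f}dV$ by taking $\Lambda$ large, should give a bound of the form $\int \varphi^2 |Rm|^2\, e^{-f}dV \le C\int (|\na\varphi|^2+\varphi^2)\, e^{-f}dV$. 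Second, I would upgrade this to $\sup|Rm|\le C$ by Moser iteration applied to the subsolution inequality $\Delta_f|Rm|\ge -|Rm|-c|Rm|^2$ of Lemma 2.5 together with a local Sobolev inequality; the requisite local volume lower bound (non-collapsing) and uniform Sobolev constant are furnished by $Rc\ge 0$ and the positive asymptotic volume ratio of Hamilton and Carrillo--Ni, which is exactly where the dependence on the geometry of $D(r_0)$ enters.

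The hard part will be that, unlike the shrinking case of Munteanu--Wang, the weighted measure $e^{-f}dV$ grows like $e^{r^2/4}dV$ and is infinite, so no global integrability is available and every estimate must be localized; correspondingly one has no scalar-curvature polynomial decay lower bound to lean on. Two technical obstacles arise in the iteration: the nonlinear terms $|Rm|^2$ (and $|Rm|^3$ from the $|Rm|^2$ inequality of Lemma 2.5) must be absorbed, which forces one to start from the $L^2$ bound above and carefully track the dependence of all constants on the weighted volumes of the annuli; and the drift $\na f$, whose length grows like $r/2$, must be controlled, for instance by iterating on balls whose radius is adapted to $|\na f|^{-1}$ or by carrying the drift term through the Sobolev inequality. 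Making these estimates uniform as one moves to infinity — using $Rc\ge 0$ for volume doubling and the quadratic growth of $F$, so that $|\na f|^2=F-R-n/2\to\infty$ and Proposition 3.2 remains effective — is the crux of the argument.
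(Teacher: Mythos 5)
Your opening reduction and your first displayed inequality are fine and coincide with the first half of the paper's argument: $Rc\ge 0$ and Lemma~2.3 give $|Rc|\le R\le R_0$, and Proposition~3.2 with $\Lambda$ large turns the $|Rc|^2$-inequality of Lemma~2.5 into $\Delta_f|Rc|^2\ge c_0|Rm|^2-C_1$ outside $D(r_0)$. The gap is in the second half. The paper does \emph{not} pass through integral estimates and Moser iteration; it adds a multiple of $|Rm|$ to form $u=|Rm|+\lambda|Rc|^2$ with $4\lambda=c+2$, so that the $|Rm|$-inequality of Lemma~2.5 combined with the above yields the pointwise differential inequality $\Delta_f u\ge u^2-C_1'u-C_2'$ outside $D(r_0)$; the quadratic term then gives the bound directly by the maximum principle applied to $G=\varphi^2 u$ with the cutoff $\varphi=\varphi(F)$, since at an interior maximum of $G$ one has $\nabla G=0$ and $\Delta_f G\le 0$, and the cutoff terms are uniformly controlled because $\Delta_f F=F$ and $|\nabla F|^2\le F$ while $|\varphi'|\le c/\rho$, $|\varphi''|\le c/\rho^2$ on $\{\rho/2\le F\le 3\rho\}$. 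This sidesteps exactly the difficulty your proposal runs into.

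Concretely, your two-stage plan does not close as written. The Caccioppoli/$L^2$ step is only useful if the resulting mean-value bound is uniform at infinity, but the drift $\nabla f$ has length $\sim r/2$ (equivalently, the weight $e^{-f}\sim e^{r^2/4}$ varies by an unbounded factor over unit balls), so localizing on balls --- even balls of radius $\sim|\nabla f|^{-1}$ as you suggest --- produces Caccioppoli constants that blow up like $r^2$, while localizing on $F$-annuli concentrates the weighted measure on the outermost shell, again destroying uniformity of the averages. Moreover, even granting a uniform local $L^2$ bound on $|Rm|$, Moser iteration for $\Delta_f|Rm|\ge -|Rm|-c|Rm|^2$ in dimension $4$ requires absorbing the potential $c|Rm|$ known only in $L^2_{loc}$, which is exactly the borderline exponent $n/2=2$; you would need an $L^p$ bound for some $p>2$, or a smallness statement, to start the iteration, and nothing in the proposal supplies it. Both obstacles disappear once one observes that the single quantity $u=|Rm|+\lambda|Rc|^2$ satisfies a superquadratic differential inequality, which is the actual mechanism of the paper's proof.
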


\begin {remark}  As we pointed out in the introduction, Proposition 3.3 is a special case of a more general result due to P.-Y. Chan \cite{Chan2}. For the reader's convenience, we shall provide a direct proof here. 

\end {remark}

\begin{proof} The proof follows essentially from the same arguments as in  \cite{MW} (see also the proof of Theorem 3.1 in \cite{CaoCui}). 
First of all, by the assumption of $Rc\ge0$ and Lemma 2.3, it follows that
\begin{equation}
 0\le |Rc|\le R\le R_0  .  
\end{equation}
Thus, by (3.2) and Proposition 3.2 (taking $\Lambda=4A_0^2$), we get
\begin{equation}
|\nabla Rc|^2\geq 2|Rm|^2 -4A_0^2R_0^2   
\end{equation}
outside some compact set $D(r_0)$. 

On the other hand, by using the inequalities in Lemma 2.5, (3.2) and (3.3), 
\begin{eqnarray*}
\Delta_f(|Rm|+\lambda |Rc|^2) & \geq & 2\lambda |\nabla Rc|^2-c |Rm|^2 -\lambda C_1 |Rm| - \lambda C_1\\
& \geq & (4\lambda-c) |Rm|^2 -\lambda C_1 |Rm| - \lambda C_2
\end{eqnarray*}
outside $D(r_0)$, where $C_1=C_1(R_0)>0$ depends on the scalar curvature upper bound $R_0$ and $C_2=C_2(A_0, R_0)$ depends on the constants $A_0$ and $R_0$.
By picking 
$$4\lambda=c+2,$$ it follows that, outside $D(r_0)$,
\begin{eqnarray*}
\Delta_f(|Rm|+\lambda |Rc|^2) & \ge & 2|Rm|^2-\lambda C_1 |Rm|-\lambda C_2\\
 & \ge & (|Rm|+\lambda |Rc|^2)^2 -C'_1 (|Rm|+\lambda |Rc|^2)-C'_2. 
\end{eqnarray*}
Let $u = |Rm|+\lambda |Rc|^2$, then on $M\setminus D(r_0)$
\begin{equation}
\Delta_f u \ge u^2-C'_1 u-C'_2 . 
\end{equation}

Next, let $\varphi (t)$ be a smooth  function on $\mathbb R^{+}$  defined by
$$ {\varphi (t) =\left\{
       \begin{array}{ll}
  1, \ \ \quad  \rho\le t\le 2\rho ,\\[4mm]
    0, \ \ \quad  0\le t\le \rho/2 \ \ \mbox{or} \ \ t\ge 3\rho
       \end{array}
    \right.}$$
such that 
\begin{equation}
 t^2 \left(|\varphi'(t)|^2+|\varphi''(t)|\right)\le c 
\end{equation}
for some universal constant $c$ and $\rho\geq 2r_0$ (arbitrarily large). 

Now we take $\varphi=\varphi (F(x))$ as a cut-off function, with support in $D(3\rho)$. Then, on $D(3\rho)\setminus D(\rho/2)$, 
\begin{equation}
 |\nabla \varphi|=|\varphi'\nabla F|\le  \frac {c} {\rho} |\na F| \qquad  \mbox{and}  
\end{equation}
\begin{equation}
|\Delta_f \varphi| =|\varphi' \Delta_f F +\varphi''|\nabla F|^2|\le \frac {c} {\rho}F +  \frac {c} {\rho^2}|\na F|^2\le 2c. 
\end{equation}

Set $G=\varphi^2 u$. Then direct computations, (3.4), (3.6) and (3.7) yield that
\begin{eqnarray*} \varphi^2 \Delta_f G &=&\varphi^4 \Delta_f u +G\Delta_f (\varphi^2) +2\varphi^2\nabla (G\varphi^{-2})\cdot\nabla \varphi^2 \\
&\ge &\varphi^4 \left( u^2 -C'_1u-C'_2\right) + G \left(2\varphi\Delta_f \varphi -6|\nabla\varphi|^2\right) +2\nabla G\cdot\nabla \varphi^2  \\
&\ge & G^2  -C'_1 G -C'_2 +2\nabla G\cdot\nabla \varphi^2.
\end{eqnarray*}
Now it follows from the standard maximum principle argument that  $G\le C_3$ on $D(3\rho)$ for some constant $C_3>0$  depending on $A_0$ and $R_0$, but independent of $\rho$. Hence $u=|Rm|+\lambda |Rc|^2\le C_3$ on $D(2\rho)\setminus D(\rho)$. Since $\rho\geq 2r_0$ is arbitrarily large, we see that
$$ \sup_{x\in M}|Rm|\le \sup_{x\in M} \left(|Rm|+\lambda |Rc|^2\right)\le C,$$
for some constant $C>0$ depending on $A_0$, $R_0$ and the geometry of $D(r_0)$. 
This completes the proof of  Proposition 3.3.
\end{proof}

\begin{proposition} Let $(M^4, g, f)$ be a $4$-dimensional complete noncompact gradient expanding Ricci soliton with nonnegative  Ricci curvature $Ric\ge 0$ and positive scalar curvature $R>0$.
Then, for any $0<a<1$,  we have 
\begin{eqnarray*} 
(i) \ \Delta_{f-2a\ln R} \large(R^{-a}{|Rm|}\large ) & \ge & -C R^{-a}|Rm|+a(1-a)R^{-a}|Rm| |\na \ln R|^2, \ \mbox{and} \\
(ii)\  \Delta_{f-2a\ln R} \large (R^{-2a}{|Rc|^2}\large) & \ge & 2(1-a)R^{-2a} {|\na Rc|^2}-CR^{-2a} {|Rc|^2}.
\end{eqnarray*} 
Here, $C>0$ is a constant depending on $A_0$, $R_0$ and the geometry of $D(r_0)$.
\end{proposition}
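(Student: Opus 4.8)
The plan is to compute the conformally weighted Laplacian head-on and then feed the result into the differential inequalities of Lemma 2.5 and the curvature bound of Proposition 3.3. The starting point is the identity $\Delta_{f-2a\ln R}\,v=\Delta_f v+2a\,\na\ln R\cdot\na v$. First I would establish a single master formula: for any exponent $\alpha$ and any nonnegative function $w$, the chain rule together with Lemma 2.4 in the form $\Delta_f R=-R-2|Rc|^2$ gives
\[
\Delta_{f-2a\ln R}\big(R^{-\alpha}w\big)=R^{-\alpha}\Delta_f w+\alpha R^{-\alpha}w+2\alpha R^{-\alpha-1}w|Rc|^2+\big(\alpha(\alpha+1)-2a\alpha\big)R^{-\alpha-2}w|\na R|^2+2(a-\alpha)R^{-\alpha-1}\na R\cdot\na w .
\]
The point to emphasize is that the coefficient of the first-order cross term is exactly $2(a-\alpha)$, so the weight $-2a\ln R$ is tailored to suppress it, while the coefficient $\alpha(\alpha+1)-2a\alpha$ of the $|\na R|^2$ term will be responsible for the good gradient contributions. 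Rewriting $R^{-\alpha-2}|\na R|^2=R^{-\alpha}|\na\ln R|^2$ throughout is convenient.

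For part (i) I would set $\alpha=a$ and $w=|Rm|$. Then $a-\alpha=0$, the cross term vanishes identically, and the $|\na R|^2$ coefficient is $\alpha(\alpha+1)-2a\alpha=a(1-a)$, which already produces the asserted good term $a(1-a)R^{-a}|Rm||\na\ln R|^2$. It then remains to insert the third inequality of Lemma 2.5, $\Delta_f|Rm|\ge-|Rm|-c|Rm|^2$, to discard the manifestly nonnegative terms $aR^{-a}|Rm|$ and $2aR^{-a-1}|Rm||Rc|^2$ (here $a>0$ is used), and to invoke $|Rm|\le C$ from Proposition 3.3 to bound $c|Rm|^2\le cC|Rm|$. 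Absorbing the resulting constants into a single $C$ yields (i); this half is essentially bookkeeping once the cross term is seen to disappear.

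For part (ii) I would take $\alpha=2a$ and $w=|Rc|^2$. Now $a-\alpha=-a\ne0$, so a residual first-order term $-2aR^{-2a-1}\na R\cdot\na|Rc|^2$ survives, and the $|\na R|^2$ coefficient is $\alpha(\alpha+1)-2a\alpha=2a$. Inserting the first inequality of Lemma 2.5, $\Delta_f|Rc|^2\ge2|\na Rc|^2-2|Rc|^2-4|Rm||Rc|^2$, produces the good term $2R^{-2a}|\na Rc|^2$, and the heart of the argument is to absorb the residual cross term into the two available gradient terms. Using $|\na|Rc|^2|\le2|Rc||\na Rc|$ and Cauchy--Schwarz, Young's inequality with parameter $\delta$ gives
\[
2aR^{-2a-1}\big|\na R\cdot\na|Rc|^2\big|\le\delta R^{-2a}|\na Rc|^2+\frac{4a^2}{\delta}R^{-2a-2}|Rc|^2|\na R|^2 .
\]
Choosing $\delta=2a$ makes the second term on the right exactly cancel the available $2a\,R^{-2a-2}|Rc|^2|\na R|^2$, while the coefficient of $|\na Rc|^2$ drops from $2$ to $2-2a=2(1-a)$. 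The zeroth-order terms $2aR^{-2a}|Rc|^2$ and $4aR^{-2a-1}|Rc|^4$ are nonnegative and may be discarded, and $-2R^{-2a}|Rc|^2-4R^{-2a}|Rm||Rc|^2\ge-CR^{-2a}|Rc|^2$ by $|Rm|\le C$, giving (ii).

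I expect the absorption step in (ii) to be the main obstacle. One must match the Young constant $\delta$ precisely to the surviving coefficient $2a$ of the $|\na R|^2$ term, so that the square term is consumed exactly and the surviving gradient coefficient is the sharp $2(1-a)$; the hypothesis $a<1$ is exactly what keeps it positive. Any cruder estimate — for instance bounding $|\na R|\le2|\na Rc|$ and $|Rc|\le R$ separately and then applying Kato — would leave a coefficient like $2-8a$ that turns negative well before $a$ reaches $1$, so it is the tuned completion of squares, rather than a generic absorption, that is essential.
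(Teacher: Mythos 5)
Your proof is correct and follows essentially the same route as the paper: both rest on the identity $\Delta_f(R^{-a})=aR^{-a}[1+2R^{-1}|Rc|^2+(a+1)R^{-2}|\nabla R|^2]$, the differential inequalities of Lemma 2.5 together with the bound $|Rm|\le C$, and in part (ii) the same tuned Cauchy--Schwarz absorption $2a R^{-2a}|\nabla Rc|^2+2aR^{-2a}|Rc|^2|\nabla\ln R|^2\ge 4aR^{-2a}|Rc||\nabla Rc||\nabla\ln R|$ that yields the coefficient $2(1-a)$. Your only departure is organizational --- packaging both computations into one master formula with general exponent $\alpha$ --- which is a tidy but equivalent presentation of the paper's argument.
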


\begin{proof} By direct computation and Lemma 2.4, we have
\begin{equation}
\Delta_{f} (R^{-a})=aR^{-a} [1+2R^{-1}|Rc|^2 +(a+1)R^{-2}|\na R|^2] .  
\end{equation}
Next, by Lemma 2.5, Proposition 3.3, identity (3.8) and direct computations, 
 \begin{eqnarray*} 
 \Delta_{f} \left(\frac {|Rm|} {R^{a}} \right) & = & R^{-a} \Delta_{f} |Rm| +|Rm|\Delta_{f} (R^{-a})+2\na (R^{-a}|Rm|R^a)\cdot\na (R^{-a})\\
& \ge & -C \frac {|Rm|}  {R^{a}}+ a \frac {|Rm|} {R^{a}}\left(1+2 \frac {|Rc|^2}{R}+(a+1)|\na \ln R|^2\right)  \\
& &  -2a\na (\frac {|Rm|} {R^{a}})\cdot \na \ln R -2a^2 \frac {|Rm|} {R^{a}} |\na \ln R|^2\\\
& \ge &  -C \frac {|Rm|}  {R^{a}} + a(1-a)\frac {|Rm|} {R^{a}} |\na \ln R|^2 -2a\na \left(\frac {|Rm|} {R^{a}}\right)\cdot \na \ln R,
\end{eqnarray*}  
where $C$ depends on $A_0$, $R_0$ and the geometry of $D(r_0)$.  This proves the first inequality. 

Similarly,  by Lemma 2.5, Proposition 3.3 and (3.8),  we have
\begin{eqnarray*} 
 \Delta_{f} \left(\frac {|Rc|^2} {R^{2a}} \right) & = & R^{-2a} \Delta_{f} (|Rc|^2) +|Rc|^2\Delta_{f} (R^{-2a})+2\na |Rc|^2\cdot\na (R^{-2a})\\
& \ge & 2 \frac {|\na Rc|^2} {R^{2a}} -C \frac {|Rc|^2} {R^{2a}} +2a\frac {|Rc|^2} {R^{2a}} \left(1+2 \frac {|Rc|^2} {R}+(2a+1)|\na \ln R|^2\right)\\
&  & -2a\na \left(\frac {|Rc|^2} {R^{2a}}\right)\cdot \na \ln R -4a^2 \frac {|Rc|^2} {R^{2a}} |\na \ln R|^2-4a\frac {|Rc|} {R^{a}}\frac {|\na Rc|} {R^{a}} |\na \ln R|\\
& \ge & 2 \frac {|\na Rc|^2} {R^{2a}} -C \frac {|Rc|^2} {R^{2a}} 
 +2a\frac {|Rc|^2} {R^{2a}}|\na \ln R|^2 -4a\frac {|Rc|} {R^{a}}\frac {|\na Rc|} {R^{a}}  |\na \ln R|
\\
& & 
-2a\na \left(\frac {|Rc|^2} {R^{2a}}\right)\cdot \na \ln R.
\end{eqnarray*}  
On the other hand, 
\[ 2a \frac {|\na Rc|^2} {R^{2a}} +2a\frac {|Rc|^2} {R^{2a}}|\na \ln R|^2 \ge 4a\frac {|Rc|} {R^{a}}\frac {|\na Rc|} {R^{a}} |\na \ln R|.  \]
Therefore, 
\[\Delta_{f} \left(\frac {|Rc|^2} {R^{2a}} \right)\ge 2(1-a)  \frac {|\na Rc|^2} {R^{2a}}  -C \frac {|Rc|^2} {R^{2a}} -2a\na \left(\frac {|Rc|^2} {R^{2a}}\right)\cdot  \na \ln R,\]
which is the desired second inequality. 
\end{proof}

Now we are ready to prove the main estimate on the curvature tensor $Rm$.

\begin{theorem} Let $(M^4, g, f)$ be a $4$-dimensional complete noncompact gradient expanding Ricci soliton with nonnegative Ricci curvature $Rc\ge 0$.
Then, there exists some constant $C>0$, depending on $A_0$, $R_0$ and the geometry of  $D(r_0)$, such that for any $0<a<1$, 
$$ |Rm|  \le \frac C {1-a} R^{a} \quad \mbox{on}\ M^4. $$
 
\end{theorem}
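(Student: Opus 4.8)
The plan is to package the two weighted differential inequalities of Proposition 3.4 into a single inequality of Riccati type for an auxiliary function, and then to run the cut-off and maximum principle argument of Proposition 3.3, now for the shifted weighted operator $\Delta_{f-2a\ln R}$. Since the Gaussian expander has $Rm\equiv 0$, I may assume $(M^4,g,f)$ is non-flat, so $R>0$ everywhere by Lemma 2.3 and the quantities below are well defined. Write $w=R^{-a}|Rm|$ and $v=R^{-2a}|Rc|^2$.

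First I would fix $\Lambda=4A_0^2$ in Proposition 3.2, giving $|\na Rc|^2\ge 2|Rm|^2-4A_0^2|Rc|^2$ outside $D(r_0)$, and substitute this into inequality $(ii)$ of Proposition 3.4 to get $\Delta_{f-2a\ln R}v\ge 4(1-a)\,w^2-C\,v$ on $M\setminus D(r_0)$. I would then set $G=w+\mu v$ with $\mu=\tfrac1{4(1-a)}$, chosen so that $\mu\,\Delta_{f-2a\ln R}v$ contributes a $w^2$-term with coefficient exactly $1$, while inequality $(i)$ gives $\Delta_{f-2a\ln R}w\ge -Cw$ after discarding its nonnegative gradient term. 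Because $0\le|Rc|\le R\le R_0$ forces $v\le R^{2(1-a)}$ to be uniformly bounded, $G$ and $w$ differ by at most $\tfrac{C}{1-a}$, so $w\ge G-\tfrac{C}{1-a}$; completing the square yields, on $M\setminus D(r_0)$,
$$\Delta_{f-2a\ln R}\,G\ \ge\ G^2-\frac{C_1}{1-a}\,G-\frac{C_2}{1-a},$$
and the factors $\tfrac1{1-a}$ here are precisely what produce the claimed blow-up as $a\to1$.

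To finish I would take $\varphi=\varphi(F)$ exactly as in Proposition 3.3 (supported in $D(3\rho)$, equal to $1$ on $D(2\rho)\setminus D(\rho)$, with $\rho\ge 2r_0$ arbitrary) and apply the maximum principle to $\Phi=\varphi^2 G$. The first-order drift of $\Delta_{f-2a\ln R}$ vanishes at an interior maximum, so the maximum principle applies just as before; the one genuinely new point is to verify that $\Delta_{f-2a\ln R}(\varphi^2)$ stays bounded independently of $\rho$. This reduces to controlling the extra term $2a\,\na\ln R\cdot\na\varphi=2a\,\varphi'\,\na\ln R\cdot\na F$. Using the soliton identity (2.2) and $\na F=-\na f$ I would write $\na R\cdot\na F=-2\,Rc(\na f,\na f)$, and then use $0\le Rc\le R\,g$ (valid since $Rc\ge0$ has trace $R$) to bound $|\na\ln R\cdot\na F|\le 2|\na F|^2\le 2F$; together with the estimate (3.5) on $\varphi'$ this makes the extra term bounded by a universal constant. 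The maximum principle then gives $\Phi\le\tfrac{C}{1-a}$ on $D(3\rho)$, hence $G\le\tfrac{C}{1-a}$ on $D(2\rho)\setminus D(\rho)$; letting $\rho\to\infty$ covers $M\setminus D(2r_0)$, and on the compact set $D(2r_0)$ I would bound $G$ directly using $\sup|Rm|\le C$ (Proposition 3.3) and the positive lower bound of $R$ there. This gives $R^{-a}|Rm|\le G\le\tfrac{C}{1-a}$ on all of $M^4$.

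I expect the main obstacle to be the drift term introduced by the weight $-2a\ln R$ in the cut-off step: a priori $\na\ln R$ is not controlled, so the bound on $\Delta_{f-2a\ln R}(\varphi^2)$ is not automatic. Its resolution hinges on the expanding soliton structure, namely the identity $\na R=2Rc(\na f)$ combined with $Rc\ge0$, which forces $\na\ln R\cdot\na F$ to be dominated by $|\na F|^2$. A secondary bookkeeping issue is the coupled choice of $\Lambda$ and $\mu$, which must simultaneously clear a clean $G^2$ leading term and track the sharp $\tfrac1{1-a}$ dependence.
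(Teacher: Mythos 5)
Your proposal is correct and follows essentially the same route as the paper: it combines Proposition 3.4 with Proposition 3.2 (taking $\Lambda=4A_0^2$), controls the drift of the cut-off via $\nabla R=2Rc(\nabla f)$ and $0\le Rc\le Rg$ exactly as in (3.10)--(3.11), and concludes by the same maximum principle argument. The only cosmetic difference is that you weight the Ricci term by $\tfrac{1}{4(1-a)}$ to normalize the $w^2$ coefficient, whereas the paper keeps $v=R^{-a}|Rm|+R^{-2a}|Rc|^2$ and carries the factor $2(1-a)$ in front of $v^2$; both yield the same $\tfrac{C}{1-a}$ bound.
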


\begin{proof} By Lemma 2.3, it suffices to consider the case when the scalar curvature $R$ is positive.  
For any $a\in (0, 1)$, let 
\[v=\frac {|Rm|}{R^a} +\frac {|Rc|^2}{R^{2a}}.\]
Then, by Proposition 3.4 and Proposition 3.2 (with $\Lambda =4A_0^2$), 
\begin{eqnarray*} 
\Delta_{f-2a\ln R} (v) & \ge  &2(1-a)  \frac {|\na Rc|^2} {R^{2a}}  -C_1 \frac {|Rm|}{R^a}-C_2 \frac {|Rc|^2} {R^{2a}}\\
& \ge & 4(1-a) \frac {|Rm|^2} {R^{2a}} -8(1-a)A_0^2 \frac {|Rc|^2}{R^{2a}} -C_1\frac {|Rm|}{R^a}-C_2 \frac {|Rc|^2} {R^{2a}}\\
& \ge & 2(1-a) v^2 - C_1 v-C_2 \qquad  \mbox{on} \ M\setminus D(r_0), 
 \end{eqnarray*} 
where  $C_1>0$ and $C_2>0$ depend on $A_0$ and $R_0$.

Set $G=\varphi^2 v$. Then,  by taking the same cut-off function $\varphi (t)$  as in the proof of Proposition 3.3,  we get
\begin{eqnarray*} \varphi^2 \Delta_ {f-2a\ln R} G &=&\varphi^4 \Delta_{f-2a\ln R} (v) +G\Delta_{f-2a\ln R}  (\varphi^2) +2\varphi^2\nabla (G\varphi^{-2})\cdot\nabla \varphi^2 \\
&\ge & 2 (1-a) G^2  -C_1 G -C_2  +G\Delta_{f-2a\ln R}  (\varphi^2) \\
& & +2\nabla G\cdot\nabla \varphi^2 -8G|\na \varphi|^2. 
\end{eqnarray*}
Note that, by  (3.6), 
\[ |\na \varphi|^2\leq \frac {c^2}{\rho^2} |\na F|^2\leq c.\]
Moreover, 
\begin{equation}
 \Delta_{f-2a\ln R}  (\varphi) = \Delta_{f}  (\varphi) +2a \na \varphi \cdot \na \ln R 
\end{equation}
and 
\begin{equation}
 |\na \ln R| \leq 2\frac {|Rc|} {R} |\na F| \leq 2|\na F| .  
\end{equation}
Hence 
\begin{equation}
|\na \varphi \cdot \na \ln R|\le \frac {2c}{\rho}|\na F|^2 \leq 2c ,  
\end{equation}
By (3.7), (3.9)  and (3.11), we obtain 
\begin{equation}
\Delta_{f-2a\ln R}  (\varphi) \ge -6c   
\end{equation}
for some universal constant $c>0$. 

Therefore,  
\[\varphi^2 \Delta_ {f-2a\ln R} G \ge 2 (1-a) G^2  -C_1 G -C_2 +2\nabla G\cdot\nabla \varphi^2.\]
Now, by the same maximum principle argument as in the proof of Proposition 3.3, it follows that $v\le \frac {C}{1-a}$ on $M^n$ for some constant $C>0$ depending on $A_0$, $R_0$ and the geometry of $D(r_0)$. Therefore, 
\[ \frac {|Rm|}{R^a} \le \frac {|Rm|}{R^a} +\frac {|Rc|^2}{R^{2a}}\le \frac {C}{1-a} \quad \mbox{on} \ M^4.\]
\end{proof}

We can also prove a similar estimate for the covariant derivative $\na Rm$ of the curvature tensor. 

\begin{theorem} Let $(M^4, g, f)$ be a $4$-dimensional  complete noncompact gradient expanding Ricci soliton with nonnegative Ricci curvature $Rc\ge 0$.
Then, there exists some constant $C>0$, depending on $A_0$, $R_0$ and the geometry of $D(r_0)$, such that for any $0\leq a<1$, 
$$ |\na Rm|  \le \frac {C} {(1-a)^2} R^{a} \quad \mbox{on}\ M^4. $$
\end{theorem}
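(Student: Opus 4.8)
The plan is to mirror the proof of Theorem 3.1, replacing $|Rm|$ by $|\na Rm|$ in the leading term and replacing the Ricci term $|Rc|^2/R^{2a}$ by $|Rm|^2/R^{2a}$. The essential point is that the inequality $\Delta_f|\na Rm|\ge -\tfrac32|\na Rm|-c|Rm|\,|\na Rm|$ of Lemma 2.6 does \emph{not} by itself produce a good quadratic term in $|\na Rm|$; that term must instead be manufactured from $\Delta_f|Rm|^2\ge 2|\na Rm|^2-2|Rm|^2-c|Rm|^3$ in Lemma 2.5. I would therefore work, after reducing to $R>0$ by Lemma 2.3, with the single auxiliary function
\[ w=\frac{|\na Rm|}{R^a}+\Lambda\,\frac{|Rm|^2}{R^{2a}}, \]
for a fixed constant $\Lambda>0$, and apply the weighted maximum principle for the shifted operator $\Delta_{f-2a\ln R}$. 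Throughout, the uniform bound $|Rm|\le C$ from Proposition 3.3 is what converts the cubic reaction terms $c|Rm|\,|\na Rm|^2$ and $c|Rm|^3$ into harmless linear ones.

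First I would record, exactly as in the proof of Proposition 3.4, two weighted computations. Since $\Delta_f|\na Rm|\ge -C|\na Rm|$ by Lemma 2.6 and Proposition 3.3, the manipulation of Proposition 3.4(i) (using identity (3.8) and absorbing the cross term into the shifted weight) gives
\[ \Delta_{f-2a\ln R}\Big(\frac{|\na Rm|}{R^a}\Big)\ge -C\,\frac{|\na Rm|}{R^a}+a(1-a)\,\frac{|\na Rm|}{R^a}\,|\na\ln R|^2\ge -C\,\frac{|\na Rm|}{R^a}. \]
Using $\Delta_f|Rm|^2\ge 2|\na Rm|^2-C|Rm|^2$ (again via Proposition 3.3) and repeating the Cauchy--Schwarz absorption of Proposition 3.4(ii) verbatim with $|Rc|^2$ replaced by $|Rm|^2$, I would obtain the crucial good term
\[ \Delta_{f-2a\ln R}\Big(\frac{|Rm|^2}{R^{2a}}\Big)\ge 2(1-a)\,\frac{|\na Rm|^2}{R^{2a}}-C\,\frac{|Rm|^2}{R^{2a}}. \]
Writing $X=|\na Rm|/R^a$ and $Y=|Rm|^2/R^{2a}$, and noting $|\na Rm|^2/R^{2a}=X^2$, these combine to
\[ \Delta_{f-2a\ln R}\,w\ge 2\Lambda(1-a)\,X^2-CX-C\Lambda\,Y. \]

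Next I would feed in Theorem 3.1 (or Proposition 3.3 when $a=0$) in the form $Y=|Rm|^2/R^{2a}\le\big(C/(1-a)\big)^2=:B$; this is the only place the already-established curvature estimate enters, and it is the source of the extra factor of $(1-a)$ in the final bound. In the region where $w\ge 2\Lambda B$ one has $X=w-\Lambda Y\ge w/2$, so $2\Lambda(1-a)X^2\ge\tfrac{\Lambda(1-a)}{2}w^2$ and hence
\[ \Delta_{f-2a\ln R}\,w\ge \tfrac{\Lambda(1-a)}{2}\,w^2-Cw-C\Lambda B. \]
Applying the cutoff $\varphi=\varphi(F(x))$ and running the identical maximum-principle argument of Proposition 3.3 and Theorem 3.1 --- in which the cutoff corrections are absorbed via (3.7) and (3.12) and introduce no dependence on $(1-a)$ --- I would conclude that at an interior maximum of $\varphi^2 w$ either $w\le 2\Lambda B$ or $\tfrac{\Lambda(1-a)}{2}w^2\le Cw+C\Lambda B$. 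Since $B\le C/(1-a)^2$, both alternatives give $w\le C/(1-a)^2$, so that
\[ \frac{|\na Rm|}{R^a}\le w\le \frac{C}{(1-a)^2}\quad\text{on }M^4, \]
which is the claim, the endpoint $a=0$ being the bounded case $|\na Rm|\le C$.

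I expect the main obstacle to be the bookkeeping in the second weighted computation: verifying that the good term $2(1-a)|\na Rm|^2/R^{2a}$ genuinely survives the Cauchy--Schwarz absorption, and then tracking the powers of $(1-a)$ through the maximum-principle step so that the input bound $Y\lesssim(1-a)^{-2}$ from Theorem 3.1 yields \emph{exactly} the stated $(1-a)^{-2}$ and not a worse power. The analytic machinery (shifted weight, cutoff, maximum principle) is identical to that of Theorem 3.1 and requires no new idea.
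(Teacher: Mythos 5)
Your proposal is correct and follows essentially the same route as the paper: the same auxiliary function $w=\frac{|\na Rm|}{R^a}+\frac{|Rm|^2}{R^{2a}}$ (up to your harmless constant $\Lambda$), the same two weighted differential inequalities under $\Delta_{f-2a\ln R}$, the same input of Theorem 3.1 to control $|Rm|^2/R^{2a}\le C(1-a)^{-2}$, and the same cutoff/maximum-principle closing step. The only (cosmetic) difference is that you extract the quadratic reaction term by a case split ($w\le 2\Lambda B$ versus $X\ge w/2$), whereas the paper completes the square, writing $2(1-a)X^2\ge (1-a)(X+Y)^2-2(1-a)Y^4$-type terms and absorbing $|Rm|^4/R^{4a}$ into a constant of size $C(1-a)^{-3}$; both yield the stated $(1-a)^{-2}$ bound.
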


\begin{proof} Without loss of generality,  we can again assume the scalar curvature $R>0$. 

First of all, by Lemma 2.6 and Proposition 3.3, we have 
\[\Delta_{f} |\na Rm|\ge -\frac 3 2 |\na Rm|-c|Rm||\na Rm|\ge -C |\na Rm|.\]
Then, by (3.8) and direct computations, 
\begin{eqnarray*} 
\Delta_{f}\frac {|\na Rm|}{R^a} & = & R^{-a} \Delta_{f} |\na Rm| +|\na Rm|\Delta_{f} (R^{-a})+2\na (R^{-a}|\na Rm|R^a)\cdot\na (R^{-a})\\
& \ge & -C R^{-a} |\na Rm| + a R^{-a}|\na Rm|  [1+2R^{-1}|Rc|^2 +(a+1) |\na \ln R|^2]\\
& &  -2a \na (R^{-a}{|\na Rm|})\cdot \na \ln R -2a^2 R^{-a} {|\na Rm|} \cdot |\na \ln R|^2\\
& \ge & -C \frac {|\na Rm|} {R^{a}} + a(1-a)\frac {|\na Rm|} {R^{a}} |\na \ln R|^2 -2a \na \left(\frac {|\na Rm|} {R^{a}}\right)\cdot \na \ln R. 
\end{eqnarray*} 
Also, by Lemma 2.5 and Proposition 3.3, we have 
\[\Delta_{f} |Rm|^2\ge 2|\na Rm|^2-2|Rm|^2 -c|Rm|^3\ge 2|\na Rm|^2-C|Rm|^2.\]
Then, by (3.8) and direct computations, we obtain
\begin{eqnarray*} 
 \Delta_{f} \frac {|Rm|^2} {R^{2a}}  & = & R^{-2a} \Delta_{f} (|Rm|^2) +|Rm|^2\Delta_{f} (R^{-2a})+2\na |Rm|^2\cdot\na (R^{-2a})\\
& \ge & 2 \frac {|\na Rm|^2} {R^{2a}} -C \frac {|Rm|^2} {R^{2a}} +2a\frac {|Rm|^2} {R^{2a}} \left(1+2 \frac {|Rc|^2} {R}+(2a+1)\frac{|\na  R|^2}{R^2}\right)\\
&  & -2a\na \left(\frac {|Rm|^2} {R^{2a}}\right)\cdot \frac {\na R}{R} -4a^2 \frac {|Rm|^2} {R^{2a}} \frac {|\na R|^2}{R^2}-4a\frac {|Rm|} {R^{a}}\frac {|\na Rm|} {R^{a}} \frac {|\na R|}{R}\\
& \ge & 2 \frac {|\na Rm|^2} {R^{2a}} -C \frac {|Rm|^2} {R^{2a}} -2a\na \left(\frac {|Rm|^2} {R^{2a}}\right)\cdot \frac {\na R}{R} \\
& & +2a\frac {|Rm|^2} {R^{2a}}\frac{|\na  R|^2}{R^2} -4a\frac {|Rm|} {R^{a}}\frac {|\na Rm|} {R^{a}} \frac {|\na R|}{R}\\
& \ge &  2(1-a)  \frac {|\na Rm|^2} {R^{2a}}  -C \frac {|Rm|^2} {R^{2a}} -2a\na \left(\frac {|Rm|^2} {R^{2a}}\right)\cdot \frac {\na R}{R}.
\end{eqnarray*}  

Thus, by setting 
\[ w=\frac {|\na Rm|} {R^a} + \frac {|Rm|^2} {R^{2a}},\] we have 
 \begin{eqnarray*} 
\Delta_{f-2a\ln R} (w) & \ge  &2(1-a)  \frac {|\na Rm|^2} {R^{2a}}  -C \frac {|\na Rm|}{R^a}-C \frac {|Rm|^2} {R^{2a}}\\
& \ge & (1-a) \left( \frac {|\na Rm|} {R^{a}}+  \frac {|Rm|^2} {R^{2a}}\right)^2 - 2(1-a)  \frac {|Rm|^4} {R^{4a}}
\\
& & -C\left(\frac {|\na Rm|}{R^a} +\frac {|Rm|^2} {R^{2a}}\right)\\
& \ge & (1-a) w^2 - C_1 w-C_2(1-a)^{-3},
\end{eqnarray*} 
where in the last inequality we have used Theorem 3.1. 

Again, by setting $G=\varphi^2 w$, taking the same cut-off function $\varphi (t)$ as before,  and applying the same  maximum principle argument as in the proof of Theorem 3.1, we get the estimate $w\le {C}/{(1-a)^2}$ on $M^n$  for some constant $C>0$ depending on $A_0$, $R_0$  and the geometry of $D(r_0)$. Therefore,
\[ \frac {|\na Rm|}{R^a} \le \frac {\na |Rm|}{R^a} +\frac {|Rm|^2}{R^{2a}}\le \frac {C}{(1-a)^2} \quad \mbox{on} \ M^4.\]
This completes the proof of Theorem 3.2.

\end{proof}

To finish the proof of Theorem 1.1, it remains to show the following simple fact. 

\begin{proposition} Let $(M^4, g, f)$ be a $4$-dimensional  complete noncompact gradient expanding Ricci soliton with nonnegative Ricci curvature $Rc\ge 0$. Suppose  in addition the scalar curvature $R$ has at most polynomial decay, then 
there exists some constant $C>0$ such that 
$$ {|Rm|} \le C R.$$ 
\end{proposition}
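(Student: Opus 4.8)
I would first dispose of the degenerate case: by Lemma 2.3 the scalar curvature is either strictly positive everywhere or $(M^4,g,f)$ is the Gaussian expander, for which $Rm\equiv 0$ and the estimate is trivial; so assume $R>0$. The starting point is the family of estimates from Theorem 3.1, namely $|Rm|\le \frac{C}{1-a}R^{a}$ valid for \emph{every} $a\in(0,1)$ with one and the same constant $C$ (depending only on $A_0$, $R_0$ and the geometry of $D(r_0)$). The natural strategy is to let $a\to 1$ and absorb the blow-up factor $\frac{1}{1-a}$ using a lower bound on $R$ supplied by the polynomial-decay hypothesis. Concretely, I would first record that ``$R$ has at most polynomial decay'' means there are constants $c_0>0$ and $N>0$ with $R(x)\ge c_0(1+r(x))^{-N}$ outside a compact set; by Lemma 2.2 the function $F$ is comparable to $\tfrac14 r^2$ at infinity, so this is equivalent to $R\ge c_1(1+F)^{-N/2}$, and hence $R^{-(1-a)}\le C(1+F)^{N(1-a)/2}$.

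The scale-adapted step is then the following. On the dyadic region where $F\sim\rho$ I would choose the exponent $a=a(\rho)$ so that $1-a=\frac{1}{\log\rho}$; with this choice $R^{-(1-a)}\le C(1+F)^{N/(2\log\rho)}$ stays uniformly bounded on $\{F\sim\rho\}$, so that $R^{a}\le C\hh R$ there, while the penalty becomes $\frac{C}{1-a}=C\log\rho$. Feeding this into Theorem 3.1 yields the preliminary bound $|Rm|\le C(\log\rho)\hh R$ on that region, i.e. $|Rm|\le C\log(2+F)\,R$ globally. The only thing separating this from the assertion is the residual logarithmic factor.

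Removing that logarithm is the crux, and I expect it to be the main obstacle. The clean bound $|Rm|\le CR$ corresponds exactly to the critical exponent $a=1$, at which the coercive terms in Proposition 3.4 degenerate: the term $2(1-a)R^{-2a}|\na Rc|^2$ in part (ii), and correspondingly $a(1-a)R^{-a}|Rm|\,|\na\ln R|^2$ in part (i), both vanish, so the differential inequality $\Delta_{f-2a\ln R}(v)\ge 2(1-a)v^2-C_1 v-C_2$ driving the maximum principle of Theorem 3.1 loses its quadratic (self-improving) term. The plan is to rerun this maximum-principle argument at $a=1$ for $u=\frac{|Rm|}{R}+\frac{|Rc|^2}{R^2}$, now using the preliminary estimate and the polynomial lower bound $R\ge c_1(1+F)^{-N/2}$ to keep the weight $\Delta_{f-2\ln R}$ and the cutoff errors (estimated through $|\na\ln R|\le 2|\na F|$ as in (3.10)) under control over the annuli $D(2\rho)\setminus D(\rho)$, where $R\gtrsim\rho^{-N/2}$. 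In this way the polynomial decay of $R$ plays precisely the role that the quadratic lower bound $R\ge c/f$ of Chow--Lu--Yang plays for shrinkers in Munteanu--Wang \cite{MW}: it is the extra ingredient that compensates for the vanishing coercivity at $a=1$ and lets the estimate close without the logarithmic loss, giving $|Rm|\le CR$ on $M^4$.
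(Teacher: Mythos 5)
Your first step is sound as far as it goes: applying Theorem 3.1 pointwise with $a=1-1/\log\rho$ on the region $F\sim\rho$ does yield the intermediate bound $|Rm|\le C\log(2+F)\,R$. But the second step, which you yourself flag as the crux, is a genuine gap rather than a proof. At $a=1$ the inequality of Proposition 3.4(ii) degenerates to $\Delta_{f-2\ln R}\bigl(R^{-2}|Rc|^2\bigr)\ge -CR^{-2}|Rc|^2$: the coefficient $2(1-a)$ of $|\na Rc|^2$ vanishes, and it is exactly that term which, via Proposition 3.2, generates the quadratic term $2(1-a)v^2$ on which the maximum principle closes. Neither the preliminary logarithmic bound nor the lower bound $R\gtrsim\rho^{-N/2}$ on the annuli reinserts any coercivity into the differential inequality, so ``rerunning the maximum principle at $a=1$'' has no mechanism to force $v\le C$; at best you recover what you already have. (In the shrinking case the passage in Munteanu--Wang from $|Rc|\le CR^a$ to the sharp $|Rc|\le CR$ is likewise not a formal limit $a\to 1$ of the same scheme.)

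The paper closes the estimate by a much shorter route that avoids the critical exponent altogether, and this is the idea your proposal is missing. Write the polynomial decay as $R\ge C/r^d$; since $|\na f|^2=-f-R$ and $-f\sim r^2/4$ by (2.4) and Lemmas 2.2--2.3, this gives $|\na f|^{-1}\le C\,R^{1/d}$. Now feed this into the pointwise Munteanu--Wang inequality of Proposition 3.1, $|Rm|\le A_0\bigl(|Rc|+|\na Rc|/|\na f|\bigr)$, together with $|Rc|\le R$ and the derivative estimate of Theorem 3.2 at the \emph{single fixed} exponent $a=1-1/d$, namely $|\na Rc|\le c|\na Rm|\le Cd^2R^{1-1/d}$. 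Then $|\na Rc|/|\na f|\le CR^{1-1/d}\cdot R^{1/d}=CR$, hence $|Rm|\le C_1R$. In other words, the decay hypothesis should be spent on $|\na f|^{-1}$ inside Proposition 3.1, so that the exponents $1-1/d$ and $1/d$ add up to exactly $1$, rather than on pushing $a\to 1$ in the maximum principle.
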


\begin{proof} 
Suppose  the scalar curvature $R$ has at most polynomial decay, say $R\ge C/{r(x)^d}$ for some positive integer $d\ge 1$ outside some compact set. Then,  by (2.4) and Lemmas 2.2-2.3, we get 
\begin{equation} 
R \ge \frac {C} {r(x)^d}  \ge \frac {C} {|\na f|^d} .   
\end{equation}
Now it follows from Proposition 3.1, (3.13) and Theorem 3.2 (with $a=1-1/d$) that 
 \begin{eqnarray*}  
|Rm|  & \le & A_0 \left( |Rc| + \frac {|\na Rc| }{|\na f|}\right) \\
& \le & A_0 ( R +  C d R^{1-1/d} R^{1/d})\le C_1 R .
\end{eqnarray*} 
\end{proof}

Finally, we prove {\bf Theorem 1.2} stated in the introduction.

\begin{proof} Recall that, by Proposition 3.1, we have 
\begin{equation}
|Rm|\le A_0\ \left(|Rc|+ \frac {|\nabla Rc|} {|\nabla f|}\right) . 
\end{equation}

On the other hand,  it follows from $Rc\geq 0$ and the assumption of the finite asymptotic scalar curvature ratio (1.5) that 
\begin{equation}
 |Rc|\leq R \leq \frac {C_1} {r^2} , 
\end{equation}
for some constant $C_1>0$. 
Moreover, by picking $a=1/2$ in Theorem 1.1, we get 
\begin{equation}
 |\na Rc|\le c|\na Rm| \leq C R^{1/2} \leq \frac {C_2}  {r} ,  
\end{equation}
while 
\begin{equation}
 |\na f|^2 =-f -R= O(r^2)     
\end{equation}
by (2.4), Lemma 2.2, and Lemma 2.3. 

Plugging (3.15)-(3.17) into (3.14) leads to 
\[|Rm| \leq \frac C {r^2}.\]
This completes the proof of Theorem 1.2. 
 \end{proof}

\section {Curvature estimates for 4D expanders with $R>0$ and proper $f$} 

In this section, we consider $4$-dimensional complete noncompact gradient expanding Ricci solitons with bounded, positive scalar curvature and proper potential function, and prove Theorem 1.4 stated in the introduction. 

Let $(M^4, g, f)$ be a $4$-dimensional  
complete noncompact gradient expanding Ricci soliton with bounded, positive scalar curvature $0<R\le R_0$ and proper potential function $f$ so that $\lim_{r(x)\to \infty}f=-\infty$. Then it follows from the work of P.-Y. Chan \cite{Chan2} (see also Remark 1.1) that 
\begin{equation}
\sup_{x\in M} |Rm|  \le C_0 
\end{equation}
for some constant $C_0>0$.  

On the other hand, by making use of Chan's curvature bound (4.1), one can obtain the following estimate for the Ricci tensor. 

\begin{proposition} Let $(M^4, g, f)$ be a $4$-dimensional complete noncompact gradient expanding Ricci soliton with bounded and positive scalar curvature  $0<R\leq R_0$. Assume $f$ is proper so that $\lim_{r(x)\to \infty}f(x)=-\infty$. Then, there exists a positive constant $C>0$  such that
\[ |Rc|^2 \leq C R \quad on \ M^4.\]
\end{proposition}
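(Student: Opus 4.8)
The plan is to prove the equivalent statement that $w:=|Rc|^2/R$ (well defined since $R>0$) is bounded on $M^4$. As in the proofs of Proposition 3.3 and Theorem 3.1, I would derive a differential inequality for $w$ carrying a favorable quadratic term $+2w^2$, and then run a cutoff-plus-maximum-principle argument on the compact sublevel sets $D(t)=\{F\le t\}$ with $F=-f+2$; these are compact precisely because $f$ is proper, so $F$ is a proper exhaustion function.

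First I would compute $\Delta_f w$ using $\Delta_f R=-R-2|Rc|^2$, the inequality $\Delta_f|Rc|^2\ge 2|\na Rc|^2-2|Rc|^2-4|Rm||Rc|^2$ of Lemma 2.5, Chan's bound $|Rm|\le C_0$ from (4.1), and the identity (3.8) for $\Delta_f(R^{-1})$. Expanding $\Delta_f(R^{-1}|Rc|^2)$ by the product rule, the two terms proportional to $w|\na\ln R|^2$ (one from $|Rc|^2\Delta_f(R^{-1})$, one from the mixed term $2\na(R^{-1})\cdot\na|Rc|^2$) cancel, and one is left with
\[ \Delta_f w \ \ge\ \frac{2|\na Rc|^2}{R}+2w^2-(1+4C_0)\,w-2\,\na\ln R\cdot\na w . \]
The danger is the first-order (drift) term $-2\,\na\ln R\cdot\na w$. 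The natural reflex is to absorb it into the weighted operator $\Delta_{f-2\ln R}$, exactly as in Theorem 3.1; but that step there relied on $|\na\ln R|\le 2(|Rc|/R)|\na F|\le 2|\na F|$, which used $Rc\ge0$ (hence $|Rc|\le R$). Here $Rc\ge0$ is \emph{not} assumed and $R$ may decay to $0$, so $|\na\ln R|$ cannot be controlled by $|\na F|$; this is the main obstacle.

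To circumvent it I would \emph{not} pass to the modified operator, but instead retain the two nonnegative quantities $2|\na Rc|^2/R$ and $2w|\na\ln R|^2$. Writing $\na w=R^{-1}\na|Rc|^2-w\,\na\ln R$ turns the drift into $-2\na\ln R\cdot\na w=-2R^{-1}\na\ln R\cdot\na|Rc|^2+2w|\na\ln R|^2$, and then Cauchy--Schwarz together with $|\na|Rc|^2|\le 2|Rc||\na Rc|$ and $|Rc|^2/R=w$ gives
\[ \Big|\,2R^{-1}\na\ln R\cdot\na|Rc|^2\,\Big|\ \le\ \frac{2|\na Rc|^2}{R}+2w\,|\na\ln R|^2 . \]
Thus the cross term is exactly swallowed by the two retained terms, leaving the clean, drift-free inequality $\Delta_f w\ge 2w^2-(1+4C_0)\,w$ on $M^4$. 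This is the crux of the argument; everything else is then routine.

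With this inequality in hand I would set $G=\varphi^2 w$ using the same cutoff $\varphi=\varphi(F)$ as in Proposition 3.3, whose estimates $|\na\varphi|^2\le c$ and $|\Delta_f\varphi|\le c$ require only that $R$ is bounded together with the soliton identities $\Delta_f F=F$ and $|\na F|^2=F-R-2$, and in particular do \emph{not} use $Rc\ge0$. At an interior maximum of $G$ on $D(3\rho)$ one has $\na G=0$ and $\Delta_f G\le 0$, which forces $2G^2-C'G\le 0$ with $C'$ depending on $C_0$; hence $G\le C'/2$, and letting $\rho\to\infty$ yields $w\le C$ on $M^4$. This gives $|Rc|^2\le CR$, as desired.
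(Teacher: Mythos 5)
Your proof is correct and follows essentially the same route as the paper's: both derive $\Delta_f(|Rc|^2/R)\ge 2(|Rc|^2/R)^2-(4C_0+2)(|Rc|^2/R)$ from Lemma 2.5, identity (3.8) and Chan's bound $|Rm|\le C_0$, absorbing the cross term $2\nabla|Rc|^2\cdot\nabla(R^{-1})$ via exactly the Cauchy--Schwarz inequality you use (the paper's inequality (4.2) at $a=1$), and then conclude by the cutoff-and-maximum-principle argument of Proposition 3.3. The only cosmetic difference is that the paper works with a general exponent $a\in(0,1]$ before setting $a=1$, whereas you phrase the absorption as eliminating the drift term of $\Delta_{f-2\ln R}$; the underlying algebra is identical.
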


\begin{proof} We first proceed as in the proof of Proposition 3.4 (ii) to derive another differential inequality for $ \Delta_{f} (R^{-1}|Rc|^2)$.  
For any $0<a \leq 1$, by  Lemma 2.5, (3.8), curvature bound (4.1) and direct computations, we have
\begin{eqnarray*} 
 \Delta_{f} \left (\frac {|Rc|^2} {R^a} \right) & = & R^{-a} \Delta_{f} (|Rc|^2) +|Rc|^2\Delta_{f} (R^{-a})+2\na |Rc|^2\cdot\na (R^{-a})\\
& \ge & 2 \frac {|\na Rc|^2} {R^a} -(4C_0+2) \frac {|Rc|^2} {R^a} + a  \frac {|Rc|^2} {R^a} \left( 2 \frac {|Rc|^2} R+ (a+1)|\na \ln R|^2\right)\\
&  &-4a\frac {|Rc||\na Rc|} {R^a} |\na \ln R\large|.\\
\end{eqnarray*} 
Since 
\[4a\frac {|Rc||\na Rc|} {R^a} |\na \ln R|\leq  \frac {4a}{a+1} \frac {|\na Rc|^2} {R^a} + a(a+1)\frac {|Rc|^2} {R^a}|\na \ln R|^2,\]
it follows that 
\begin{equation}
\Delta_{f} \left (\frac {|Rc|^2} {R^a} \right) \ge  \frac {2(1-a)}{1+a} \frac {|\na Rc|^2} {R^a} -(4C_0+2)  \frac {|Rc|^2} {R^a} + 2a  \frac {|Rc|^4} {R^{1+a}} . 
\end{equation}
Hence, by taking $a=1$ and setting $u=\frac {|Rc|^2} {R}$, we get 
\[ \Delta_{f} (u) \geq 2u^2-(4C_0+1)u.\]

Now, by using the same cut-off function and the same maximum principle argument as in the proof of Proposition 3.3, we obtain the estimate
\[ |Rc|^2 \leq C R \quad on \ M,\]
for some positive constant $C>0$. 
\end{proof} 

Moreover,  by considering a quantity of the form $\frac{|Rm|^{2p}}{R^b}+\frac{|Rc|^{2}}{R^a}$ suggested to us by P.-Y. Chan,  we can actually derive estimates on $Rm$ and $\na Rm$ in terms of the scalar curvature $R$.  Now we restate Theorem 1.4 here. 

\begin{theorem} Let $(M^4, g, f)$ be a $4$-dimensional complete noncompact gradient expanding Ricci soliton with bounded and positive scalar curvature  $0<R\leq R_0$. Assume $f$ is proper so that $\lim_{r(x)\to \infty}f(x)=-\infty$. Then, for any $\alpha \in (0, 1/2)$, 
$$ |Rm|  \leq C_{\alpha} R^{\alpha} \quad {\mbox{and}} \quad |\nabla Rm|\leq C_{\alpha} R^{\alpha} \quad on \ M^4 $$
for some positive constant $C_{\alpha}>0$, with $C_{\alpha}\to \infty$ as $\alpha \to 1/2$.  
\end{theorem}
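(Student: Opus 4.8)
The plan is to reproduce the cut-off maximum-principle scheme of Proposition 3.3 and Theorem 3.1, now feeding in, as substitutes for the hypothesis $Rc\ge 0$ used there, the two a priori bounds available in this setting: Chan's estimate (4.1), $|Rm|\le C_0$, and the Ricci estimate of Proposition 4.1, $|Rc|^2\le CR$.

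A preliminary remark is that the conclusion of Proposition 3.2 remains valid here. Its proof used only that $\na f\neq 0$ outside a compact set and that $|\na f|^2\to\infty$ at infinity; both follow from properness, since $|\na f|^2=-f-R\to+\infty$ by (2.4) and $0<R\le R_0$. Thus, for any prescribed $\Lambda>0$, one still has $|\na Rc|^2\ge \frac{\Lambda}{2A_0^2}|Rm|^2-\Lambda|Rc|^2$ outside a large sublevel set $D(r_0)$.

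The heart of the matter is to run the maximum principle on the coupled quantity $G=\frac{|Rm|^{2p}}{R^b}+\frac{|Rc|^2}{R^a}$. The Ricci piece is the engine: as in (4.2), $\Delta_f\left(\frac{|Rc|^2}{R^a}\right)$ produces a genuinely \emph{superlinear} good term $2a\frac{|Rc|^4}{R^{1+a}}\ge 2aR_0^{a-1}\left(\frac{|Rc|^2}{R^a}\right)^2$, together with the gradient term $\frac{|\na Rc|^2}{R^a}$, which Proposition 3.2 converts into an arbitrarily large multiple of $\frac{|Rm|^2}{R^a}$. For the curvature piece I would write $|Rm|^{2p}=(|Rm|^2)^p$ and differentiate using $\Delta_f|Rm|^2\ge 2|\na Rm|^2-C|Rm|^2$ (Lemma 2.5 with (4.1)) and $|\na Rm|\gtrsim|\na Rc|$, working with a single weight $\Delta_{f-2\alpha\ln R}$ and absorbing the residual first-order terms by Young's inequality exactly as in the derivation of Proposition 3.4(ii). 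One then calibrates the exponents $a,b,p$ (with $\alpha=\frac{b}{2p}$) so that these good terms dominate, cuts off with the function $\varphi(F)$ of Proposition 3.3 (satisfying (3.5)--(3.7)), and applies the same maximum-principle step to conclude $G\le C_\alpha$, hence $|Rm|\le C_\alpha R^\alpha$. The derivative bound then follows by repeating the proof of Theorem 3.2 with $w=\frac{|\na Rm|}{R^\alpha}+\frac{|Rm|^2}{R^{2\alpha}}$: Lemma 2.6 with (4.1) gives $\Delta_f|\na Rm|\ge -C|\na Rm|$, the term $\frac{|Rm|^2}{R^{2\alpha}}$ supplies the superlinear $\frac{|\na Rm|^2}{R^{2\alpha}}\sim w^2$, and the only dangerous term $\frac{|Rm|^4}{R^{4\alpha}}$ is now controlled by $C_\alpha^4$ using the curvature estimate just obtained.

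I expect the main obstacle to lie in this middle step. Unlike $|Rc|^2$, the function $|Rm|$ has no favorable quadratic term in its Bochner formula---only the harmful $-c|Rm|^2$---so the curvature piece cannot manufacture its own superlinear good term, and the whole argument rests on borrowing superlinearity from the Ricci piece through the coupling $|\na Rc|^2\gtrsim|Rm|^2$. This transfer closes only for a restricted window of exponents, and tracking it is exactly what forces $\alpha<\tfrac12$ and drives $C_\alpha\to\infty$ as $\alpha\to\tfrac12$, since the coefficient $\frac{2(1-a)}{1+a}$ weighting $\frac{|\na Rc|^2}{R^a}$ degenerates. The delicate part is therefore the calibration of $a,b,p$ and of the size of $\Lambda$ so that a net positive superlinear term survives throughout $M\setminus D(r_0)$.
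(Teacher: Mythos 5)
Your overall architecture is the right one and matches the paper's: keep Proposition 3.2 alive using only properness (correct, and worth saying explicitly as you do), feed in Chan's bound (4.1) and the Ricci estimate $|Rc|^2\le CR$ of Proposition 4.1, couple $\frac{|Rm|^{2p}}{R^b}$ with $\frac{|Rc|^2}{R^a}$, and let the transfer $|\na Rc|^2\gtrsim |Rm|^2$ supply the superlinear term, with the window of admissible exponents closing at $\alpha=1/2$. But there is one concrete step in your plan that fails as stated: you propose to work ``with a single weight $\Delta_{f-2\alpha\ln R}$'' and to carry the drift term into the localized maximum principle, as in Proposition 3.4(ii) and Theorems 3.1--3.2. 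In Section 3 that is legitimate only because the cut-off computation needs $|\na\varphi\cdot\na\ln R|\le C$, which is supplied by (3.10)--(3.11), i.e.\ by $|\na\ln R|\le 2\frac{|Rc|}{R}|\na F|\le 2|\na F|$ --- and the inequality $|Rc|\le R$ is a consequence of $Rc\ge 0$. Under the hypotheses of Theorem 4.1 you only have $|Rc|\le C\sqrt{R}$ from Proposition 4.1, so the best you get is $|\na\ln R|\le CR^{-1/2}|\na F|$, and the term $G\,\na(\varphi^2)\cdot\na\ln R$ on the annulus $D(3\rho)\setminus D(\rho/2)$ is of size $G\,F/(\rho\sqrt{R})\sim G/\sqrt{R}$, which is uncontrolled where $R$ is small. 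So the weighted operator cannot be localized here.

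The paper's proof is designed precisely to avoid this: it never introduces the weight and works with the unweighted $\Delta_f$ throughout Section 4, absorbing \emph{every} occurrence of $\na\ln R$ (including what would otherwise become the drift) by Young's inequality against the positive term $b(b+1)\frac{|Rm|^{2p}}{R^b}|\na\ln R|^2$ coming from $\Delta_f(R^{-b})$. This is exactly what forces the calibration you leave unspecified: the leftover Hessian coefficient is $2p(2p-1)-\frac{4p^2b}{b+1}$, which is nonnegative iff $2p\ge 1+b$, and the paper takes $2p=1+b$ so that (4.4) reads $\Delta_f\bigl(\frac{|Rm|^{1+b}}{R^b}\bigr)\ge -C\frac{|Rm|^{1+b}}{R^b}$ with no gradient terms left over; the analogous absorption in the Ricci piece is what produces the degenerating coefficient $\frac{2(1-a)}{1+a}$ in (4.2)--(4.3). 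With $a=2\alpha$ and $b=\alpha/(1-\alpha)$ one then has $\frac{|Rm|^2}{R^a}=\bigl(\frac{|Rm|^{1+b}}{R^b}\bigr)^{2(1-\alpha)}$, giving the superlinear inequality (4.5) with exponent $2(1-\alpha)>1$. Your quantity for the derivative bound, $w=\frac{|\na Rm|}{R^\alpha}+\frac{|Rm|^2}{R^{2\alpha}}$, has the same defect (it imports the weighted operator from Theorem 3.2); the paper instead uses $\frac{|\na Rm|^{1+b}}{R^b}+\frac{|Rm|^2}{R^a}$ with the same $2p=1+b$ cancellation. In short: the strategy is right, but the localization device you chose depends on the gradient estimate for $\ln R$ that only $Rc\ge 0$ provides, and repairing it is exactly what dictates the exponent choices you deferred.
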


\begin{proof} First of all,  for any $0<a<1$, from (4.2) we have 
\begin{equation}
\Delta_{f} \frac {|Rc|^2} {R^a}  \ge  \frac {2(1-a)}{1+a} \frac {|\na Rc|^2} {R^a} -(4C_0+2)  \frac {|Rc|^2} {R^a} . 
\end{equation}

On the other hand, by using Lemma 2.5, (3.8),  curvature bound (4.1) and direct computations, for any $p>0$ and $b>0$ we get  (whenever $|Rm|\neq 0$ in case $p < 1$) 
\begin{eqnarray*} 
 \Delta_{f}  \frac {|Rm|^{2p}} {R^{b}} & = & R^{-b} \Delta_{f} (|Rm|^{2p}) +|Rm|^{2p}\Delta_{f} (R^{-b})+2\na (|Rm|^{2p})\cdot\na (R^{-b})\\
& \ge & 2p(2p-1) \frac {|Rm|^{2p-2}} {R^{b}} |\na |Rm||^2 -Cp \frac {|Rm|^{2p}} {R^{b}} \\
&  & +b(b+1)\frac {|Rm|^{2p}} {R^{b}} |\na  \ln R|^2 -4pb\frac {|Rm|^{2p-1}} {R^{b}} |\na |Rm|| \ \!|\na \ln R|\\
& \ge & [2p(2p-1)-(b+1)^{-1}{4p^2}b] \frac {|Rm|^{2p-2}} {R^{b}} |\na |Rm||^2 -Cp \frac {|Rm|^{2p}} {R^{b}}.
\end{eqnarray*}  
Now,  for any $b\in (0,1)$, if we choose $2p=1+b \in (1, 2)$ then 
\[2p(2p-1)- \frac{4p^2b}{b+1}=0,\] 
hence
\begin{equation}
\Delta_{f} \frac {|Rm|^{1+b}} {R^{b}} \geq -C \frac {|Rm|^{1+b}} {R^{b}} .  
\end{equation}

By combining (4.3) and (4.4), and applying Proposition 3.2 (with $\Lambda=2A_0^2$) and Proposition 4.1, we obtain  
\begin{eqnarray*} 
 \Delta_{f} \left (\frac {|Rm|^{1+b}} {R^{b}} +\frac {|Rc|^2} {R^a} \right) & \ge & (1-a) \frac {|\na Rc|^2} {R^a} -C_1\left( \frac {|Rm|^{1+b}} {R^{b}} + \frac {|Rc|^2} {R^a}\right)\\
& \geq & (1-a) \frac {|Rm|^2} {R^a} -C_1\left( \frac {|Rm|^{1+b}} {R^{b}} + \frac {|Rc|^2} {R^a}\right) -C_2.
\end{eqnarray*}  
Finally, for  any $\alpha \in (0, 1/2)$, we choose $a=2\alpha \in (0, 1), \ b= \alpha/(1-\alpha)<a$ and set 
\[v=\frac {|Rm|^{1+b}} {R^{b}} +\frac {|Rc|^2} {R^a}.\]
Without loss of generality, by Proposition 4.1, we may assume $v\leq  2 R^{-b} |Rm|^{1+b} $. Then, we have 
\begin{equation}
 \Delta_{f} (v) \geq \frac {1-2\alpha} {4} v^{2(1-\alpha)}-C_1 v-C_2 .
\end{equation}
Since $1< 2(1-\alpha)$, by using the same cut-off function as in Proposition 3.3 and applying the standard maximum principle argument, we conclude from (4.5) that $v\leq C_{\alpha}$ for some positive constant $C_{\alpha}>0$, with $C_{\alpha}\to \infty$ as $\alpha \to 1/2$. 

Therefore, $|Rm|^{1+b} \leq C_{\alpha} {R^{b}}$,  or equivalently
\[ |Rm|\leq C_{\alpha} R^{\frac b {1+b}}= C_{\alpha} R^{\alpha} \quad on \ M^4.\]

Similarly, by using Lemma 2.6, (3.8), curvature bound (4.1) and direct computations,  we get  (whenever $|Rm|\neq 0$)
\begin{eqnarray*} 
\Delta_{f}\frac {|\na Rm|^{2p}}{R^b} & = & R^{-b} \Delta_{f} |\na Rm|^{2p} +|\na Rm|^{2p}\Delta_{f} (R^{-b})+2\na (|\na Rm|^{2p})\cdot\na (R^{-b})\\
& \geq &  [2p(2p-1)-(b+1)^{-1}{4p^2}b] \frac {|\na Rm|^{2p-2}} {R^{b}} |\na|\na Rm||^2 -Cp \frac {|\na Rm|^{2p}} {R^{b}}
\end{eqnarray*}  
and, for any $0<a<1$,
\[\Delta_{f} \frac {|Rm|^{2}} {R^{a}} \geq \frac {2(1-a)}{1+a} \frac{|\na Rm|^2}{R^a} -C \frac {|Rm|^{2}} {R^{a}}.\]
 Combining the above two inequalities and choosing $2p=1+b$, we obtain
\[ \Delta_{f} \left(\frac {|\na Rm|^{1+b}} {R^{b}} +\frac {|Rm|^2} {R^a} \right) \geq (1-a) \frac {|\na Rm|^2} {R^a} -C_3\left( \frac {|\na Rm|^{1+b}} {R^{b}} + \frac {|Rm|^2} {R^a}\right).\]

Now, for  any $\alpha \in (0, 1/2)$, we choose $a=2\alpha \in (0, 1), \ b= \alpha/(1-\alpha)<a$ so that 
\begin{eqnarray*} 
 \Delta_{f} \left (\frac {|\na Rm|^{1+b}} {R^{b}} +\frac {|Rm|^2} {R^a} \right) &  \geq & (1-2\alpha) \left(\frac {|\na Rm|^{1+b}} {R^b}\right)^{2(1-\alpha)} \\
& & - C_3\left( \frac {|\na Rm|^{1+b}} {R^{b}} + \frac {|Rm|^2} {R^a}\right) .
\end{eqnarray*}  

Since $1< 2(1-\alpha)$ and $|Rm|^2 \leq C_a R^a$, by using the same cut-off function as in Proposition 3.3 and applying the standard maximum principle argument, we conclude that 
\[ |\nabla Rm|\leq  C_{\alpha} R^{\alpha} \quad on \ M^4\]
for some positive constant $C_{\alpha}>0$ with $C_{\alpha}\to \infty$ as $\alpha \to 1/2$. 
\end{proof}

\section{Further Remarks}

\subsection{Extension of Theorem 1.1 and Theorem 1.2} \ In the more general case when the Ricci curvature of $(M^4, g, f)$ is only nonnegative outside some compact set,  the same argument as in the proof of Theorem 1.1 goes through without any changes.  Indeed, under the assumption of $Rc\geq 0$ outside a compact set $K\subset M$, it is not hard to check that the asymptotic quadratic growth estimate (2.5), outside $K$, for the potential function and the scalar curvature upper bound $R\le R_0$ (for some $R_0>0$) remain true. Moreover, the gradient estimate  (3.10) for the scalar curvature clearly holds on $M\setminus K$ as well.  Thus, we have 

\begin{theorem} Let $(M^4, g, f)$ be a $4$-dimensional complete noncompact 
gradient expanding Ricci soliton with positive scalar curvature $R>0$. Suppose the Ricci curvature of $(M^4, g, f)$ is nonnegative outside some compact set $K\subset M$. Then, for any $0\leq a<1$, there exists a constant $C>0$ (independent of $a$) such that 
\begin{equation}
 |Rm|  \le \frac {C} {1-a} R^a \quad {\mbox{and}} \quad |\nabla Rm|\le \frac {C} {(1-a)^2} R^a \quad on \ M^4 . 
\end{equation}
Moreover, suppose in addition the scalar curvature $R$ has at most polynomial decay at infinity, then 
\begin{equation}
 {|Rm|} \le C R  \quad on \ M^4 . 
\end{equation}
\end{theorem}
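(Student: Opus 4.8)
The plan is to re-run, essentially verbatim, the arguments behind Theorem 1.1---namely Theorem 3.1, Theorem 3.2 and Proposition 3.5---after enlarging the compact sublevel set $D(r_0)$ so that it swallows the bad set $K$. The single ingredient requiring no curvature hypothesis is Proposition 3.1 of Munteanu--Wang, which is valid for \emph{every} $4$-dimensional gradient Ricci soliton and so carries over unchanged. Everything else reduces to re-establishing, under the weaker assumption $Rc\ge 0$ on $M\setminus K$, three facts: the quadratic growth (2.5) of the potential, the scalar curvature upper bound $R\le R_0$, and the gradient estimate (3.10) for $\ln R$.

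First I would verify these three facts near infinity. The upper bound in (2.5) never used the sign of $Rc$, while, as recorded in Remark 2.1, only the lower bound $\tfrac14(r-c_1)^2\le -f$ invokes $Rc\ge 0$; since the soliton equation gives $\na^2(-f)=\tfrac12 g+Rc\ge\tfrac12 g$ on $M\setminus K$, the function $-f$ is strictly convex near infinity, so (2.5) persists outside a slightly larger compact set, and in particular $|\na f|^2=F-R-\tfrac n2\to\infty$ as $r\to\infty$, with $F$ a proper exhaustion. The upper bound $R\le R_0$ follows from Hamilton's maximum-principle argument applied to $\Delta_f R=-R-2|Rc|^2$ on the region $M\setminus K$ where $Rc\ge 0$, combined with the boundedness of the smooth function $R$ on the compact set $\overline K$. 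Finally (3.10), i.e. $|\na\ln R|\le 2(|Rc|/R)|\na F|\le 2|\na F|$, uses only $0\le |Rc|\le R$ and so holds on $M\setminus K$.

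With these in hand, I would fix $r_0$ so large that $K\subset D(r_0)$ and $|\na f|^2\ge\Lambda$ on $M\setminus D(r_0)$. Then Propositions 3.2, 3.3 and 3.4 hold on $M\setminus D(r_0)$ with identical proofs, since each invoked only $Rc\ge 0$ together with the three facts above on that region. Consequently the maximum-principle arguments of Theorem 3.1 and Theorem 3.2, which employ the cut-off $\varphi(F)$ supported in annuli $D(3\rho)\setminus D(\rho/2)$ with $\rho\ge 2r_0$, go through word for word and yield $|Rm|\le \tfrac{C}{1-a}R^a$ and $|\na Rm|\le\tfrac{C}{(1-a)^2}R^a$ on $M\setminus D(r_0)$. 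On the compact set $\overline{D(r_0)}$ the standing hypothesis $R>0$ gives $R\ge R_{\min}>0$ and $|Rm|\le M_0<\infty$, whence $R^a\ge\min(R_{\min},1)$ for every $a\in[0,1)$ and both bounds hold there with a constant independent of $a$; patching the two regions proves (5.1) on all of $M^4$. The polynomial-decay conclusion (5.2) then follows exactly as in Proposition 3.5, by inserting the now-global estimate of Theorem 3.2 with $a=1-1/d$ into Proposition 3.1.

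The point demanding the most care---the main, if mild, obstacle---is precisely the re-derivation of the scalar curvature bound and the potential growth under $Rc\ge 0$ away from $K$ only, since the extremum of $R$ or of $-f$ could now sit inside $K$, where the Ricci tensor is uncontrolled. Both are handled by separating the behaviour near infinity (governed by $Rc\ge 0$) from that on the compact core $\overline K$ (governed by smoothness and $R>0$). The remaining subtlety is to confirm that the constant contributed by $\overline{D(r_0)}$ stays independent of $a$; this holds because the compact bound $|Rm|\le (M_0/\min(R_{\min},1))\,R^a$ is uniform over $a\in[0,1)$, matching the claimed $a$-independence of $C$ in the statement.
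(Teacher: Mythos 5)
Your proposal is correct and follows essentially the same route as the paper, which itself only sketches this extension: you identify exactly the three ingredients the authors name (the quadratic growth (2.5) of the potential, the bound $R\le R_0$, and the gradient estimate (3.10)), re-establish them outside an enlarged $D(r_0)\supset K$, and then observe that the cut-off/maximum-principle arguments of Theorems 3.1--3.2 and Proposition 3.5 run verbatim on the annuli $D(3\rho)\setminus D(\rho/2)$, with the compact core handled by $R\ge R_{\min}>0$ uniformly in $a$. The one point to tighten: the upper bound $R\le R_0$ does not come from a maximum principle applied to $\Delta_f R=-R-2|Rc|^2$ (that inequality makes $R$ weighted-superharmonic and so controls minima, not maxima, on a noncompact manifold); the argument the paper refers to (Hamilton/Ma--Chen) is first order, namely $\langle\na R,\na f\rangle=2\,Rc(\na f,\na f)\ge 0$ on $M\setminus K$, so $R$ is non-increasing along the outward integral curves of $\na F$, which sweep out all of $M\setminus D(r_0)$ and reduce the bound to $\max_{\overline{D(r_0)}}R$. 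With that substitution your proof is complete and matches the paper's intended argument.
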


By Theorem 5.1, (3.17) and Proposition 3.1, we  also obtain the following result on asymptotic curvature behavior. 

\begin{theorem}  Let $(M^4, g, f)$ be a $4$-dimensional complete noncompact 
gradient expanding Ricci soliton with positive scalar curvature $R>0$. Suppose the Ricci curvature of $(M^4, g, f)$ is nonnegative outside some compact set $K\subset M$ and the asymptotic scalar curvature ratio is finite
\[  \limsup_{r\to \infty} R  r^2< \infty, \]
where $r=r(x)$ is the distance function to a fixed base point in $M$. 
Then $(M^4, g, f)$ has finite {\em asymptotic curvature ratio }
\[  \limsup_{r\to \infty} |Rm| r^2< \infty.\] 
\end{theorem}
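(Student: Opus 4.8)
The plan is to run the proof of Theorem 1.2 essentially unchanged, substituting the extension Theorem 5.1 for Theorem 1.1 and verifying that each ingredient remains valid when $Rc \ge 0$ is assumed only outside the compact set $K$. Since the conclusion concerns $\limsup_{r\to\infty} |Rm| r^2$, only the behavior of the geometry for $r(x)$ large matters, so the region $K$ may be ignored throughout.

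First I would invoke Proposition 3.1, which holds for every $4$-dimensional gradient Ricci soliton with no curvature sign assumption: wherever $\na f \neq 0$,
\[ |Rm| \le A_0\left(|Rc| + \frac{|\na Rc|}{|\na f|}\right). \]
By the extension of Lemma 2.2 to $M \setminus K$ recorded at the start of Section 5, $-f$ is a strictly convex exhaustion outside $K$, so $\na f \neq 0$ for all large $r(x)$ and the inequality is available on the relevant region. I would then estimate the three factors on the right separately. The hypothesis $Rc \ge 0$ outside $K$ gives $|Rc| \le R$ there, and the finite asymptotic scalar curvature ratio gives $R \le C_1/r^2$ at infinity, hence $|Rc| \le C_1/r^2$. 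Applying Theorem 5.1 with $a = 1/2$ yields $|\na Rm| \le C R^{1/2}$, so $|\na Rc| \le c|\na Rm| \le C R^{1/2} \le C_2/r$. Finally, the normalization (2.4) gives $|\na f|^2 = -f - R$, and the quadratic growth estimate (2.5), valid outside $K$, gives $|\na f| \ge c\, r$ for $r$ large.

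Feeding these three bounds into Proposition 3.1 produces $|Rm| \le C/r^2$ for all sufficiently large $r$, and passing to $\limsup_{r\to\infty}$ gives the desired finite asymptotic curvature ratio. The one step deserving genuine care --- the analogue of the main obstacle --- is the lower bound $|\na f| \ge c\, r$: it is exactly this that makes the quotient $|\na Rc|/|\na f|$ decay like $r^{-2}$ rather than merely like $r^{-1}$, and it is the sole place where the nonnegativity of Ricci (now only outside $K$) enters the asymptotic analysis, through the lower bound on $-f$ in (2.5). By contrast, Proposition 3.1, the scalar curvature upper bound $R \le R_0$ of Lemma 2.3, and the derivative estimate of Theorem 5.1 are all insensitive to the geometry on $K$, so no new difficulty arises from weakening the curvature hypothesis to hold only outside a compact set.
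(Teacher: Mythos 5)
Your proposal is correct and follows exactly the route the paper takes: the paper derives Theorem 5.2 by combining Proposition 3.1, the bound $|\na Rc|\le c|\na Rm|\le CR^{1/2}$ from Theorem 5.1 with $a=1/2$, and the growth of $|\na f|^2=-f-R$ from (2.5) and (3.17). Your observation that the lower bound $|\na f|\ge c\,r$ (hence the lower bound on $-f$ in (2.5), valid outside $K$) is the one point where the Ricci sign hypothesis enters is an accurate and slightly more careful reading than the paper's compressed statement.
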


\subsection {Open questions} 
As mentioned in the introduction, the scalar curvature lower bound $R\ge C/f$ proved by Chow-Lu-Yang \cite{CLY} for complete gradient shrinking solitons played a crucial role in the work of Munteanu-Wang \cite{MW}.  On the other hand, if there is any polynomial lower bound for $R$, then we can achieve the sharp curvature estimate $|Rm|\le CR$ in the expanding case by Theorem 1.1. 

\medskip
{\bf Question 1}. Suppose $(M^n, g, f)$ is a complete noncompact non-flat gradient expanding soliton with  nonnegative Ricci curvature $Rc\ge 0$. Does its scalar curvature $R$ have at most polynomial decay, i.e.,  $R\ge C/F^d$ for some $d\ge 2$, at infinity? 

\begin{remark} 
Recently, P.-Y. Chan \cite{Chan2} has proved an exponential decay scalar curvature lower bound for expanding solitons with positive scalar curvature  $R> 0$ and proper $f$: Suppose $(M^n, g, f)$, $n\ge 2$,  is any $n$-dimensional complete noncompact gradient expanding Ricci soliton with positive scalar curvature  $R> 0$ and proper potential function $F=-f+n/2$. Then
\[ R\ge CF^{1-\frac n2}e^{-F} \quad on \ M^n\]
for some constant $C>0$.  
\end{remark}

\begin{remark} 
Based on the constructions of Deruelle \cite{Der16, Der17}, very recently Chan and Zhu \cite{Chan3} have exhibited an example of $3$-dimensional complete  asymptotically conical expanding soliton with nonnegative curvature operator $Rm\ge 0$ such that 
\[ \liminf_{r\to \infty} \ F |Rm| =0 \quad  \mbox{and} \ \limsup_{r\to \infty} \ F |Rm| <\infty . \] 
\end{remark}

Next, gradient estimates for scalar curvature $R$ is quite essential whenever we involve the operator $\Delta_{f-2\ln R}$ in the maximum principle argument. 
In the shrinking case,  Munteanu-Wang \cite{MW} initially obtained the gradient estimate \[|\na \ln R|^2 \le C\ln (f+2),\] and used it to derive the sharp $Rc$ estimate $|Rc| \le CR$ and the estimate $|\na Rm|\le CR$.  Similarly, for Ricci expanders with $Rc\ge 0$, the gradient estimate in (3.10), 
\[|\na \ln R|^2 \le 4 |\na F|^2\le 4 F,\] 
is used to obtain the almost sharp estimate on $Rm$ and the estimate on 
$|\na Rm|$ in Theorem 1.1. 

\medskip
{\bf Question 2}. Suppose $(M^4, g, f)$ is a complete noncompact gradient expanding Ricci soliton with $0<R \le R_0$ and proper $f$. Does the gradient estimate 
\begin{equation}
|\na \ln R|\le C |\na F|
\end{equation}
(or more precisely, $R^{-1} { Rc(\na F, \na F) }  \le C$)
hold for some constant $C>0$ ? 

\begin{remark} If (5.3) holds, then one would be able to improve Theorem 1.4 (Theorem 4.1) to  $|Rm|^2\le CR$ and $|\na Rm|^2\le CR$.
\end{remark} 

Moreover, in the shrinking case, the estimate $|\na Rm | \le C R$ of Munteanu and Wang \cite{MW} implies the sharper gradient estimate 
\begin{equation}
|\na \ln R| \le C \quad \text{on} \ M .
\end{equation}
In the expanding case, from $|\na Rm| \le C R^{a} $ for any $0<a<1$, we also have 
\begin{equation}
|\na \ln R| \le C_aR^{a-1} \quad \text{on} \ M .
\end{equation}

\medskip
 {\bf Question 3}. Does gradient estimate (5.4) for the scalar curvature hold for 4-dimensional gradient expanding Ricci solitons with nonnegative Ricci curvature? 

\begin{remark}  If the answer to Question 3 is affirmative, then one would be able to improve the conclusions in Theorem 1.1 to  $|Rm|\le CR$ and $|\na Rm|\le CR$ without any extra assumption.
\end{remark}

\bigskip

\end{document}